
\documentclass[a4paper,10pt]{article}
%%%%%%%%%%%%%%%%%%%%%%%%%%%%%%%%%%%%%%%%%%%%%%%%%%%%%%%%%%%%%%%%%%%%%%%%%%%%%%%%%%%%%%%%%%%%%%%%%%%%%%%%%%%%%%%%%%%%%%%%%%%%%%%%%%%%%%%%%%%%%%%%%%%%%%%%%%%%%%%%%%%%%%%%%%%%%%%%%%%%%%%%%%%%%%%%%%%%%%%%%%%%%%%%%%%%%%%%%%%%%%%%%%%%%%%%%%%%%%%%%%%%%%%%%%%%
\usepackage{amsfonts}
\usepackage{amsmath,amsthm}
\usepackage{amssymb}
\usepackage{thmdefs}
\usepackage[round,authoryear]{natbib}
\usepackage{epsfig}
\usepackage{fancyhdr}
\usepackage{verbatim}
\usepackage[latin1]{inputenc}

\setcounter{MaxMatrixCols}{10}
%TCIDATA{TCIstyle=Article/ART4.LAT,lart,sciword}

%TCIDATA{OutputFilter=LATEX.DLL}
%TCIDATA{Version=5.00.0.2570}
%TCIDATA{<META NAME="SaveForMode" CONTENT="1">}
%TCIDATA{LastRevised=Sunday, September 07, 2014 15:43:07}
%TCIDATA{<META NAME="GraphicsSave" CONTENT="32">}
%TCIDATA{Language=American English}
%TCIDATA{CSTFile=40 LaTeX article.cst}

\numberwithin{equation}{section}
\numberwithin{equation}{section} 
\oddsidemargin 0mm 
\textwidth 160mm 
\topmargin 00mm 
\textheight 210mm

\begin{document}

\title{Discrete Dispersion Models and their Tweedie Asymptotics}
\author{Bent Jørgensen \\
%EndAName
University of Southern Denmark, \\
Department of Mathematics and Computer Science, \\
Campusvej 55, DK-5230 Odense M, Denmark; \\
E-mail : bentj@stat.sdu.dk \and Célestin C. Kokonendji \\
%EndAName
Université de Franche-Comté - UFR Sciences et Techniques \\
Laboratoire de Mathématiques de Besançon - UMR 6623 CNRS \\
16, route de Gray - 25030 Besançon cedex, France; \\
E-mail : celestin.kokonendji@univ-fcomte.fr}
\maketitle

\begin{abstract}
We introduce a class of two-parameter discrete dispersion models, obtained
by combining convolution with a factorial tilting operation, similar to
exponential dispersion models which combine convolution and exponential
tilting. The equidispersed Poisson model has a special place in this
approach, whereas several overdispersed discrete distributions, such as the
Neyman Type A, Pólya-Aeppli, negative binomial and Poisson-inverse Gaussian,
turn out to be Poisson-Tweedie factorial dispersion models with power
dispersion functions, analogous to ordinary Tweedie exponential dispersion
models with power variance functions. Using the factorial cumulant
generating function as tool, we introduce a dilation operation as a discrete
analogue of scaling, generalizing binomial thinning. The Poisson-Tweedie
factorial dispersion models are closed under dilation, which in turn leads
to a Poisson-Tweedie asymptotic framework where Poisson-Tweedie models
appear as dilation limits. This unifies many discrete convergence results
and leads to Poisson and Hermite convergence results, similar to the law of
large numbers and the central limit theorem, respectively. The dilation
operator also leads to a duality transformation which in some cases
transforms overdispersion into underdispersion and vice-versa. Many of the
results have multivariate analogues, and in particular we consider a class
of multivariate Poisson-Tweedie models, a multivariate notion of over- and
underdispersion, and a multivariate zero-inflation index.\medskip\ \newline
\emph{Keywords: }factorial cumulant generating function; factorial tilting
family; infinite dilatability; multivariate discrete distribution;
over-/underdispersion; Poisson-Tweedie mixture \medskip\ \newline
\emph{Mathematics Subject Classification:} 60E10; 62E20; 62H05
\end{abstract}

\section{Introduction}

Given the plethora of discrete distributions available in the literature 
%TCIMACRO{\TeXButton{nocitep: Johnson2005}{\nocite{Johnson2005}} }%
%BeginExpansion
\nocite{Johnson2005}
%EndExpansion
\nocite{Wimmer1999}(Johnson \emph{et al.}, 2005;\ Wimmer and Altmann, 1999),
it is difficult to point, with conviction, to one or the other two-parameter
discrete family as being especially suited for modelling count data
phenomena such as over/underdispersion or zero-inflation/deflation. The
central limit theorem leads to the normal distribution, which is continuous,
and there are few general discrete asymptotic results available other than
conventional Poisson convergence. Echoing \cite{Tweedie1984}, who introduced
the family of continuous Tweedie models now bearing his name, we should
perhaps be looking for \emph{an index which distinguishes between some
important discrete distributions}. Ideally, such a class of discrete
distributions should be justified by a general asymptotic result like the
Tweedie convergence theorem of \cite{Jorgensen1994}.

There are several problems that make the discrete case more difficult to
handle than the continuous case. The first problem is that there are no
immediate discrete analogues of location and scale transformations, which
are crucial in the continuous case for handling scaling limits such as the
central limit theorem. A second and related problem is that there are no
obvious discrete analogues of standard continuous distributions such as the
normal or gamma distributions. A third problem is that discrete natural
exponential families (power-series distributions), while ubiquitous, tend to
have much more complicated variance functions than in the continuous case.

An important step forward was taken by \cite{Steutel1979}, who introduced
the discrete analogue of positive stable distributions by using binomial
thinning instead of scaling. The same technique has been used extensively
for constructing discrete time-series models 
%TCIMACRO{\TeXButton{citep: Weiss2008}{\citep[e.g.][]{Weiss2008}}}%
%BeginExpansion
\citep[e.g.][]{Weiss2008}%
%EndExpansion
. Recently, \cite{Harremoes2010} used binomial thinning to formulate an
extended Poisson convergence theorem, which they called the "law of thin
numbers", whereas \cite{Puig2003} and \cite{Puig2006,Puig2007} have
characterized discrete distributions closed under convolution and binomial
thinning.

In order to make further progress, we shall follow the footsteps of \cite%
{Jorgensen2007a} and \cite{Jorgensen2010}, who developed analogues of
Tweedie asymptotics for extremes and geometric sums, respectively. These
authors explored specialized versions of the cumulant generating function
(CGF), and showed that each of the two corresponding analogues of the
variance function are efficient characterization and convergence tools.

In the present paper we argue that the factorial cumulant generating
function (FCGF) is the most suitable choice for handling the discrete case,
along with the first two factorial cumulants, namely the mean and the
dispersion. Firstly, the FCGF characterizes convolution additively.
Secondly, we shall use the FCGF to generalize binomial thinning to a
dilation operator, providing the discrete analogue of scaling. Thirdly, the 
\emph{dispersion function}, which expresses the dispersion as a function of
the mean, leads to a new discrete Poisson-Tweedie convergence theorem. Many
known discrete distributions such as the Hermite, Neyman Type A, Pó%
lya-Aeppli, binomial, negative binomial and Poisson-Inverse Gaussian
distributions have power dispersion functions, and hence appear as limits in
the corresponding regime of power asymptotics for dispersion functions. The
corresponding power parameter is the index alluded to above.

The plan of the paper is to develop a new class of factorial dispersion
models and Poisson-Tweedie mixtures as analogues of conventional exponential
and Tweedie dispersion models, respectively, along the lines of 
%TCIMACRO{%
%\TeXButton{citet: Jorgensen1997}{\citet[Ch.\ 3-4]{Jorgensen1997}}}%
%BeginExpansion
\citet[Ch.\ 3-4]{Jorgensen1997}%
%EndExpansion
. We review FCGFs and factorial cumulants in Section~\ref{sec:cumulant}, we
consider Poisson and Hermite convergence, and we consider the concept of
infinite dilatability and its relation with Poisson mixtures. We introduce a
new operation called the M-transformation, and show that in some cases it
presents a duality between over- and underdispersion. In Section~\ref{sec3}
we consider a new factorial tilting operation and introduce the class of
factorial dispersion models and their dispersion functions. We show that the
Poisson-Tweedie mixtures are factorial dispersion models and show that their
dispersion functions are of power form. In Section~\ref{sec5} we present a
general convergence theorem for dispersion functions (with proof given in
Appendix B) and present the new Poisson-Tweedie convergence theorem and some
examples. We consider the multivariate case in Section~\ref{sec:Multivariate}%
, where we discuss multivariate factorial cumulants and some of their
properties, and consider multivariate over-, equi-, and underdispersion. We
also introduce a new class of multivariate Poisson-Tweedie mixtures, which
provides multivariate versions of many of the distributions mentioned above.
Finally, Appendix A contains a summary of relevant results for exponential
dispersion models.

\section{Factorial cumulant generating functions\label{sec:cumulant}}

We begin by developing basic results for the FCGF and factorial cumulants,
and use them to prove the law of thin numbers and Hermite convergence, which
are discrete analogues of the law of large numbers and the central limit
theorem, respectively. We define Poisson translation and dilation, and
discuss infinite dilatability and its relation with Poisson mixtures. We
also introduce the M-transformation and discuss its relation with
over/underdispersion. Many results in the following deal with the \emph{%
discrete case}, meaning non-negative integer-valued random variables, but
unless otherwise indicated, results are valid for general random variables.

\subsection{Cumulant generating functions\label{sec:general}}

The ordinary cumulant generating function (CGF) for a random variable $X$ is
defined by 
\begin{equation*}
\kappa (s)=\kappa (s;X)=\log \mathrm{E}(e^{sX})\text{ for }s\in \mathbb{R}%
\text{,}
\end{equation*}%
with effective domain $\mathrm{dom}(\kappa )=\left\{ s\in \mathbb{R}:\kappa
(s)<\infty \right\} $. The CGF satisfies the linear transformation law 
\begin{equation}
\kappa (t;aX+b)=\kappa (at;X)+bt,  \label{linear}
\end{equation}%
which is crucial for asymptotic results like the law of large numbers and
the central limit theorem.

To obtain a discrete analogue of (\ref{linear}), we consider the \emph{%
factorial cumulant generating function} (FCGF)\emph{\ }for $X$ 
%TCIMACRO{\TeXButton{citep: Johnson2005}{\citep[p.~55]{Johnson2005}}}%
%BeginExpansion
\citep[p.~55]{Johnson2005}%
%EndExpansion
, defined by 
\begin{equation}
C(t)=C(t;X)=\log \mathrm{E}\left[ \left( 1+t\right) ^{X}\right] =\kappa
(\log \left( 1+t\right) ;X)\text{ for }t>-1\text{,}  \label{see}
\end{equation}%
with effective domain $\mathrm{dom}(C)=\left\{ t>-1:C(t)<\infty \right\}
=\exp \left[ \mathrm{dom}(\kappa )\right] -1$. We also note that $C$, like $%
\kappa $, characterizes convolution additively, i.e. for independent random
variables $X$ and $Y$ we have 
\begin{equation}
C(t;X+Y)=C(t;X)+C(t;Y)\text{.}  \label{plus}
\end{equation}

The CGF $\kappa $ is a real analytic convex function, and strictly convex
unless $X$ is degenerate. Hence, $C$ is also real analytic, and the domain $%
\mathrm{dom}(C)$, like $\mathrm{dom}(\kappa )$, is an interval. The
derivative $\dot{C}(t)=\dot{\kappa}(\log \left( 1+t\right) )/\left(
1+t\right) $ has the same sign as $\dot{\kappa}(\log \left( 1+t\right) )$ on 
$\mathrm{int}\left( \mathrm{dom}(C)\right) $. Hence, by the convexity of $%
\kappa $, the FCGF $C$ is either monotone or u-shaped. Let $\mathcal{K}$
denote the set of CGFs $\kappa $ such that $\mathrm{int}(\mathrm{dom}(\kappa
))\neq \emptyset $, and let $\mathcal{C}$ denote the corresponding set of
FCGFs $C$ of the form (\ref{see}) with $\mathrm{int}(\mathrm{dom}(C))\neq
\emptyset $. In this case, either of the functions $\kappa $ or $C$
characterizes the distribution of $X$, and in equations like (\ref{see}), we
assume that equality holds in a neighbourhood of zero. From now on, CGF and
FCGF refer to functions in $\mathcal{K}$ and $\mathcal{C}$, respectively.

\subsection{Dilation and Poisson translation\label{sec:Dil}}

In order to obtain a discrete analogue of scaling, we define the \emph{%
dilation} $c\cdot X$ of a random variable $X$ by 
\begin{equation}
C(t;c\cdot X)=C(ct;X)\text{,}  \label{GCF}
\end{equation}%
for scalars $c>0$ such that right-hand side of (\ref{GCF}) is an FCGF. We
say that $X$ (or its distribution) is \emph{infinitely dilatable} if the
dilation $c\cdot X$ exists for any $c>0$. For $X$ discrete and $0<c<1,$ the
dilation corresponds to \emph{binomial thinning}, 
\begin{equation}
c\cdot X\overset{D}{=}\sum\limits_{i=1}^{X}N_{i}\text{,}  \label{geosum}
\end{equation}%
where $\overset{D}{=}$ denotes equality in distribution, and where $%
N_{1},N_{2},\ldots $ is a sequence of i.i.d.~Bernoulli random variables with
probability parameter $c$, independent of $X$. To prove (\ref{geosum}), we
note that the Bernoulli FCGF is%
\begin{equation}
C(t;N_{1})=\log \left( 1+ct\right) \text{,}  \label{geoc}
\end{equation}%
whereby 
\begin{equation*}
\log \mathrm{E}\left[ \left( 1+t\right) ^{c\cdot X}\right] =\log \mathrm{E}%
\left[ \left( 1+ct\right) ^{X}\right] =C(ct;X)\text{,}
\end{equation*}%
which implies (\ref{GCF}). We note in passing, that the Bernoulli FCGF (\ref%
{geoc}) is not infinitely dilatable, due to the constraint $c<1$, whereas
the geometric distribution with FCGF 
\begin{equation}
C(t)=-\log \left( 1-\mu t\right)  \label{geometric}
\end{equation}%
is infinitely dilatable, since the domain for $\mu $ in (\ref{geometric}) is 
$\mathbb{R}_{+}$. The binomial thinning operator is an important tool for
constructing discrete time series models, see e.g. Weiß\ (2008)\nocite%
{Weiss2008} and references therein.

A further extension of the dilation operator may be obtained by means of
geometric compounding. Let us assume that, conditionally on a non-negative
random variable $X$, we have a negative binomial FCGF $-X\log \left(
1-ct\right) $, where $c>0$. The resulting negative binomial compound
variable $Y$ has FCGF 
\begin{equation}
C(t;Y)=\log \mathrm{E}\left[ \left( 1-\mu t\right) ^{-X}\right] =C(-ct;-X),
\label{sign}
\end{equation}%
which is clearly infinitely dilatable. In the case where $X$ is discrete,
this is a geometric compounding of the form $N_{1}+\cdots +N_{X}$, where $%
N_{1},N_{2},\ldots $ is a sequence of i.i.d.~geometric random variables with
mean $c,$ independent of $X$, corresponding to the negative binomial
thinning of \cite{Ristic2009} and \cite{Barreto-Souza2014}. For $c=1$ and
reversing the sign of $t$ in (\ref{sign}) we obtain $C(t;-X)=C(-t;Y),$
providing a possible interpretation of the reflection operator.

The dilation operator satisfies the following associative property: 
\begin{equation*}
c_{2}\cdot \left( c_{1}\cdot X\right) \overset{D}{=}\left( c_{1}c_{2}\right)
\cdot X\text{,}
\end{equation*}%
provided that the left-hand side of the equation exists. We also note the
following distributive property of dilation for independent random variables 
$X$ and $Y$, 
\begin{equation*}
c\cdot \left( X+Y\right) \overset{D}{=}c\cdot X+c\cdot Y\text{.}
\end{equation*}

To obtain a discrete analogue of translation, we note that the Poisson
distribution $\mathrm{Po}(\mu )$ with mean $\mu \geq 0$ has FCGF 
\begin{equation}
C(t;\mathrm{Po}(\mu ))=\mu t\text{ for }t>-1\text{,}  \label{exponential}
\end{equation}%
including the degenerate case $\mathrm{Po}(0)\equiv 0$. The Poisson FCGF is
hence analogous to the CGF of a constant, and it is infinitely dilatable. We
define the \emph{Poisson translation} operator $\oplus \mu $ for $\mu \geq 0$
by convolution, i.e. 
\begin{equation*}
C(t;X\oplus \mu )=C(t;X)+\mu t\text{.}
\end{equation*}%
As an example, we may write the stationary Poisson INAR(1) time series model 
%TCIMACRO{\TeXButton{citep: McKenzie1985}{\citep[cf.][]{McKenzie1985}} }%
%BeginExpansion
\citep[cf.][]{McKenzie1985}
%EndExpansion
in the following way: 
\begin{equation*}
X_{t}=c\cdot X_{t-1}\oplus \left[ \lambda \left( 1-c\right) \right]
\end{equation*}%
where $X_{t}\sim \mathrm{Po}(\lambda )$ for $t=0,1,\ldots $, and $0<c<1$. We
also define the \emph{Poisson subtraction} $\ominus \mu $ by 
\begin{equation}
C(t;X\ominus \mu )=C(t;X)-\mu t  \label{substral}
\end{equation}%
for values $\mu \geq 0$ such that the right-hand side of (\ref{substral}) is
an FCGF. As an example, we consider the \emph{Short} distribution \nocite%
{Johnson2005}(Johnson \emph{et al.}, 2005, p. ~419) with FCGF 
\begin{equation*}
C(t;X)=\mu _{1}\left( e^{\phi t}-1\right) +\mu _{2}t
\end{equation*}%
with $\mu _{1},\mu _{2},\phi >0$. In this case, the Poisson subtraction (\ref%
{substral}) exists for $\mu \leq \mu _{2}$.

\subsection{Factorial cumulants}

When $0\in \mathrm{int}(\mathrm{dom}(C))$, the derivatives $%
C^{(n)}(0)=C^{(n)}(0;X)$ are the \emph{factorial cumulants} of $X$, which
have many analogies with ordinary cumulants. The first factorial cumulant is
the mean $\mathrm{E}(X)=\dot{C}(0;X)$. The second factorial cumulant $%
\mathrm{S}(X)$, defined by%
\begin{equation*}
\mathrm{S}(X)=\ddot{C}(0;X)=\ddot{\kappa}(0;X)-\dot{\kappa}(0;X)=\mathrm{Var}%
\left( X\right) -\mathrm{E}\left( X\right) ,
\end{equation*}%
is denoted the \emph{dispersion} for $X$ (compare with \emph{Fisher's\
dispersion index} $\mathrm{D}\left( X\right) =\mathrm{Var}\left( X\right) /%
\mathrm{E}\left( X\right) $). The dispersion is bounded below by the
negative expectation, 
\begin{equation}
-\mathrm{E}(X)\leq \mathrm{S}(X).  \label{lessthan}
\end{equation}%
The dispersion $\mathrm{S}\left( X\right) $ indicates \emph{underdispersion}
if $-\mathrm{E}\left( X\right) \leq \mathrm{S}\left( X\right) <0$, \emph{%
equidispersion} if $\mathrm{S}\left( X\right) =0$, and \emph{overdispersion}
if $\mathrm{S}\left( X\right) >0,$ relative to the Poisson distribution.

The mean and dispersion satisfy the following transformation laws: 
\begin{equation}
\mathrm{E}\left( c\cdot X\oplus \mu \right) =c\mathrm{E}(X)+\mu \text{,}%
\qquad \mathrm{S}\left( c\cdot X\oplus \mu \right) =c^{2}\mathrm{S}(X),
\label{factor}
\end{equation}%
similar to the mean and variance of the linear transformation (\ref{linear}%
). Furthermore the $n$th factorial cumulant is homogeneous of degree $n$
with respect to dilation, i.e. $C^{(n)}(0;c\cdot X)=c^{n}C^{(n)}(0;X)$. For
general $X$ and $Y$ we obtain 
\begin{equation*}
\mathrm{S}(X+Y)=\mathrm{S}(X)+\mathrm{S}(Y)+2\mathrm{Cov}(X,Y)\text{,}
\end{equation*}%
which follows from the corresponding formula for the variance. In
particular, the dispersion is additive for uncorrelated random variables.

Applying the transformation laws (\ref{factor}) to the dilated variable $%
c\cdot X$, the inequality (\ref{lessthan}) implies that 
\begin{equation*}
-\mathrm{E}(X)\leq c\mathrm{S}(X)\text{.}
\end{equation*}%
Hence if $\mathrm{E}(X)$ and $\mathrm{S}(X)$ have opposite signs, the value
of $c$ is bounded from above or below, in which case $X$ cannot be
infinitely dilatable. In particular, if $\mathrm{E}\left( X\right) >0$ and $%
X $ is underdispersed, then $X$ is not infinitely dilatable, an example
being the Bernoulli distribution (\ref{geoc}).

Another index that may be obtained from the FCGF is the \emph{zero-inflation
index}, defined for a discrete random variable $X$ by 
\begin{equation}
\mathrm{ZI}\left( X\right) =1+\frac{\log P(X=0)}{\mathrm{E}\left( X\right) }%
=1+\frac{C(-1;X)}{\dot{C}(0;X)},  \label{ZI}
\end{equation}%
cf. Puig \& Valero 2006, 2007)\nocite{Puig2006,Puig2007}. The index $\mathrm{%
ZI}\left( X\right) $ indicates zero-inflation if $\mathrm{ZI}\left( X\right)
>0$ and zero-deflation if $\mathrm{ZI}\left( X\right) <0$, relative to the
Poisson distribution for which $\mathrm{ZI}\left( X\right) =0$. We consider
a multivariate generalization of $\mathrm{ZI}\left( X\right) $ in Section %
\ref{sec:fcgf}.

\subsection{Poisson and Hermite convergence\label{sec:Poisson}}

We shall now present discrete analogues of the law of large numbers and the
central limit theorem, obtained by exploring dilation and Poisson
translation/subtraction. First we present a new proof of the \emph{Law of
Thin Numbers} due to \cite{Harremoes2010}, which is a kind of Poisson law of
large numbers for discrete distributions, in the sense that the Poisson
distribution plays the role of a degenerate distribution. Our proof is based
on the FCGF, whereas \cite{Harremoes2010} used direct methods in their proof.

We define the \emph{dilation average} for i.i.d. sequence $%
X_{1},X_{2},\ldots $ by 
\begin{equation}
\overline{X}_{n}=n^{-1}\cdot \left( X_{1}+\cdots +X_{n}\right) \text{.}
\label{dilatione}
\end{equation}%
For discrete random variables the dilation in (\ref{dilatione}) is defined
by binomial thinning, because $n^{-1}\leq 1$.

\begin{proposition}[Law of Thin Numbers]
\label{LTN}Let $X_{1},X_{2},\ldots $ denote an i.i.d. sequence of discrete
random variables with mean $\mu >0$. Then the dilation average $\overline{X}%
_{n}$ converges in distribution to a Poisson distribution: 
\begin{equation}
\overline{X}_{n}\overset{D}{\rightarrow }\mathrm{Po}(\mu )\text{ as }%
n\rightarrow \infty \text{.}  \label{poisslim}
\end{equation}
\end{proposition}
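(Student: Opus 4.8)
The plan is to work entirely with the FCGF, reducing the convergence to a first-order expansion of $C(\cdot\,;X_{1})$ at the origin and then appealing to a continuity theorem. Write $C(t)=C(t;X_{1})$ for the common FCGF of the i.i.d.\ summands. First I would combine the two structural properties already established: by the additivity of the FCGF under convolution (\ref{plus}), the sum $X_{1}+\cdots +X_{n}$ has FCGF $n\,C(t)$, and then by the definition of dilation (\ref{GCF}) applied with $c=n^{-1}$, the dilation average (\ref{dilatione}) satisfies
\begin{equation*}
C(t;\overline{X}_{n})=n\,C(n^{-1}t).
\end{equation*}
This exact identity carries all the structural content; the remainder is a limiting argument.

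Next I would let $n\rightarrow \infty $ for fixed $t$. Since $C(0)=\log \mathrm{E}[1]=0$ and the first factorial cumulant is the mean, $\dot{C}(0)=\mathrm{E}(X_{1})=\mu $, I would rewrite the identity as a difference quotient,
\begin{equation*}
n\,C(n^{-1}t)=t\,\frac{C(n^{-1}t)-C(0)}{n^{-1}t}\longrightarrow t\,\dot{C}(0)=\mu t,
\end{equation*}
because $n^{-1}t\rightarrow 0$ lies eventually in the domain of $C$ and $C$ is differentiable at the origin. By (\ref{exponential}) the limit $\mu t$ is precisely the FCGF of $\mathrm{Po}(\mu )$, so I would have pointwise convergence of the FCGFs to the target FCGF near $t=0$.

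Finally I would upgrade pointwise FCGF convergence to convergence in distribution, and this is the step that needs the most care, where I expect the main obstacle to lie. For a general discrete $X_{1}$ with only a finite mean, the FCGF need not be finite for $t>0$ (there $1+t>1$ and the underlying power series may diverge), so I cannot assume an open neighbourhood of $0$ inside the domain. I would therefore restrict attention to $t\in (-1,0)$, equivalently to the argument $u=1+t\in (0,1)$ of the probability generating function $\mathrm{E}(u^{X_{1}})$, where finiteness is automatic; the differentiability used above is then the one-sided derivative at $u=1$, which exists and equals $\mu $ exactly because the mean is finite. On this range the computation gives
\begin{equation*}
\mathrm{E}\bigl(u^{\overline{X}_{n}}\bigr)\longrightarrow \exp \{\mu (u-1)\}\quad \text{for every }u\in (0,1),
\end{equation*}
and the classical continuity theorem for probability generating functions --- pointwise convergence on $(0,1)$ to a function that is itself a p.g.f.\ continuous at $u=1$ --- yields $\overline{X}_{n}\overset{D}{\rightarrow }\mathrm{Po}(\mu )$, which is (\ref{poisslim}). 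The hypothesis $\mu >0$ merely ensures the limit is non-degenerate; the same argument covers $\mu =0$ under the convention $\mathrm{Po}(0)\equiv 0$.
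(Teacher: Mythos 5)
Your proof is correct, and while it opens exactly as the paper's does---FCGF additivity plus the definition of dilation give $C(t;\overline{X}_{n})=nC(n^{-1}t;X_{1})$, which is then shown to tend to the Poisson FCGF $\mu t$---it closes by a genuinely different and in fact more robust route. The paper expands $nC(n^{-1}t;X_{1})=\mu t+O(n^{-1})$ and then invokes Theorem~1 of Jensen (1997), which requires $0\in \mathrm{int}\left(\mathrm{dom}\,C(\cdot;\overline{X}_{n})\right)$; both steps implicitly assume that the FCGF of $X_{1}$ is finite on an open interval around $0$ (equivalently, that the probability generating function has radius of convergence exceeding $1$), which can fail for a heavy-tailed discrete law possessing only a finite mean, and the $O(n^{-1})$ error term moreover tacitly uses more than first-order smoothness at the origin. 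Your version avoids both issues: restricting to $t\in(-1,0)$, where finiteness of the FCGF is automatic for discrete variables, and writing $nC(n^{-1}t)$ as a difference quotient requires only the one-sided derivative $\dot{C}(0^{-})=\mu$, which follows from finiteness of the mean alone; the classical continuity theorem for probability generating functions (pointwise convergence on $(0,1)$ to a function that is itself a PGF continuous at $u=1$) then upgrades this to weak convergence. What the paper's approach buys is uniformity of machinery---the same Jensen-type argument is reused for Hermite convergence and in the proof of Theorem \ref{Moraq} in Appendix B; what yours buys is that it establishes the proposition in its stated generality (finite mean, no additional domain hypothesis) with entirely elementary tools.
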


\begin{proof}
By using the additive property (\ref{plus}) along with the
definition of dilation, we obtain 
\begin{eqnarray}
C\left( t;\overline{X}_{n}\right) &=&C\left( n^{-1}t;X_{1}+\cdots
+X_{n}\right)  \notag \\
&=&nC(n^{-1}t;X_{1})=\mu t+O(n^{-1}),  \label{cconv}
\end{eqnarray}%
which converges to the Poisson FCGF as $n\rightarrow \infty $. Since $0\in 
\mathrm{int}\left( \mathrm{dom}C\left( \cdot ;\overline{X}_{n}\right)
\right) $, we conclude from Theorem 1 of \cite{Jensen1997} that there exists
a probability measure $P$ such that the sequence of probability measures $%
P_{n}$ corresponding to $C\left( \cdot ;\overline{X}_{n}\right) $ converges
weakly to $P$. It follows that the sequence $\exp C\left( e^{s}-1;\overline{X%
}_{n}\right) $ converges to the moment generating function (MGF) of $P$ for $%
e^{s}-1\in \mathrm{dom}C\left( \cdot ;\overline{X}_{n}\right) $, which in
view of (\ref{cconv}) implies that $P$ is the Poisson distribution $\mathrm{%
Po}(\mu ),$ completing the proof.
\end{proof}

For a constant integer variable $n$, the thinned variable $c\cdot n$ is
binomial $\mathrm{Bi}(c,n)$, corresponding to the special case $X_{i}\equiv
1 $ of (\ref{dilatione}). We hence obtain the classical Poisson convergence
theorem as a corollary, albeit in a rather terse notation.

\begin{corollary}[Poisson Convergence]
For $\mu >0$ we obtain 
\begin{equation*}
\left( \frac{\mu }{n}\right) \cdot n\overset{D}{\rightarrow }\mathrm{Po}(\mu
)\text{ as }n\rightarrow \infty \text{.}
\end{equation*}
\end{corollary}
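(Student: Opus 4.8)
The plan is to obtain the limit directly from the FCGF, exactly as in the proof of Proposition~\ref{LTN}, rather than from binomial probabilities. First I would record the FCGF of the degenerate variable equal to the integer $n$: from the definition (\ref{see}) we have immediately $C(t;n)=\log \mathrm{E}[(1+t)^{n}]=n\log(1+t)$, valid for $t>-1$.

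Next I would apply the dilation law (\ref{GCF}) with scaling factor $c=\mu/n$ to obtain $C(t;(\mu/n)\cdot n)=C((\mu/n)t;n)=n\log(1+(\mu/n)t)$; for $n>\mu$ the factor $\mu/n$ lies in $(0,1)$, so this is a genuine binomial thinning in the sense of (\ref{geosum})--(\ref{geoc}), namely the FCGF of $\mathrm{Bi}(\mu/n,n)$ flagged in the remark preceding the corollary. A one-line Taylor expansion of the logarithm then gives $n\log(1+(\mu/n)t)=\mu t+O(n^{-1})$, which converges on any fixed neighbourhood of $0$ to the Poisson FCGF $\mu t$ of (\ref{exponential}).

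To pass from pointwise FCGF convergence to convergence in distribution I would reuse the final step of the proof of Proposition~\ref{LTN} essentially verbatim: since $\mathrm{dom}\,C(\cdot;(\mu/n)\cdot n)=(-n/\mu,\infty)$ contains a fixed neighbourhood of $0$ for all large $n$, we have $0\in\mathrm{int}(\mathrm{dom}\,C(\cdot;(\mu/n)\cdot n))$, so Theorem~1 of \cite{Jensen1997} supplies a weak limit $P$; substituting $t=e^{s}-1$ to recover the MGF and invoking the FCGF limit just computed identifies $P=\mathrm{Po}(\mu)$.

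I anticipate no substantive obstacle here: the computation is an elementary expansion of the logarithm, and the only hypothesis to verify for \cite{Jensen1997} is the domain condition, which is transparent. A tidier ``corollary'' route would exploit the associativity of dilation to write $(\mu/n)\cdot n=\mu\cdot(n^{-1}\cdot n)$ and apply Proposition~\ref{LTN} with $X_{i}\equiv 1$ to get $n^{-1}\cdot n\overset{D}{\rightarrow}\mathrm{Po}(1)$ before dilating by $\mu$; I would avoid this, however, since it would require a separate argument that dilation is continuous with respect to weak convergence, a subtlety the direct FCGF computation sidesteps entirely.
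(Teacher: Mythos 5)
Your proposal is correct and matches the paper's intended argument: the paper presents this corollary as the special case $X_i\equiv 1$ of Proposition~\ref{LTN}, and your proof is precisely that proposition's machinery (FCGF expansion plus the Jensen weak-convergence step) with the thinning factor $\mu/n$ in place of $n^{-1}$. If anything, your explicit computation is slightly more complete than the paper's terse reduction, since specializing the dilation average to $X_i\equiv 1$ literally yields only the case $\mu=1$, whereas your expansion of $n\log\left(1+\mu t/n\right)$ handles arbitrary $\mu>0$ directly (the only slip, immaterial to the argument, is that by the paper's convention the FCGF domain is restricted to $t>-1$, so it is $(-1,\infty)$ rather than $(-n/\mu,\infty)$; the needed condition $0\in\mathrm{int}(\mathrm{dom})$ holds either way).
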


Before turning to Hermite convergence, we consider the Hermite distribution.

\begin{example}[Hermite distribution]
\label{Lapl}The Hermite distribution 
%TCIMACRO{\TeXButton{citep: Kemp1965}{\citep{Kemp1965}}}%
%BeginExpansion
\citep{Kemp1965}%
%EndExpansion
, denoted \textrm{$\mathrm{PT}$}$_{0}(\mu ,\gamma )$ (conforming with the
notation of Section \ref{sec3.6}) is defined by the FCGF 
\begin{equation}
C(t)=\frac{\gamma }{2}t^{2}+\mu t,  \label{geotrans}
\end{equation}%
where $\mu >0$ is the mean, and $\gamma $ is the dispersion, satisfying $%
0<\gamma \leq \mu $. This restriction on the parameters follows from the
corresponding log PGF 
\begin{equation*}
C(u-1)=\frac{\gamma }{2}\left( u^{2}-1\right) +\left( \mu -\gamma \right)
\left( u-1\right) \text{,}
\end{equation*}%
whose coefficients $\gamma $ and $\mu -\gamma $ must both be non-negative 
%TCIMACRO{\TeXButton{citep: Kemp1965}{\citep{Kemp1965}}}%
%BeginExpansion
\citep{Kemp1965}%
%EndExpansion
. The Hermite distribution is a discrete analogue of the normal
distribution, in the sense that its third and higher order factorial
cumulants are all zero. It is, however, rather different in nature from
other discrete normal distributions such as those proposed by \cite{Kemp1997}
and Roy (2003)\nocite{Roy2003}. See Giles (2010)\nocite{Giles2010} and Puig
\& Barquinero (2011)\nocite{Puig2011} for applications of the Hermite
distribution.
\end{example}

In order to obtain an analogue of the central limit theorem, we propose to
use Poisson translation and dilation instead of centering and scaling. We
consider the partial sum $S_{n}=X_{1}+\cdots +X_{n}$ based on i.i.d.
discrete variables $X_{i}$ with $\mathrm{E}(X_{i})=m>0$ and $\mathrm{S}%
(X_{i})=\gamma >0$. A formal analogy of the conventional centered and scaled
partial sum of the form 
\begin{equation*}
Z_{n}=n^{-1/2}\cdot \left( S_{n}\ominus nm\right)
\end{equation*}%
is, however, degenerate, because the centered sum $S_{n}\ominus nm$, is a
non-negative variable with mean zero. By adding a constant $\mu \geq \gamma $%
, we obtain, formally,%
\begin{eqnarray}
Z_{n}\oplus \mu &=&n^{-1/2}\cdot \left( S_{n}\ominus nm\right) \oplus \mu 
\notag \\
&=&n^{-1/2}\cdot \left[ S_{n}\ominus \left( nm-n^{1/2}\mu \right) \right] 
\text{.}  \label{cltmu}
\end{eqnarray}%
The constant being subtracted in (\ref{cltmu}) is now $nm-n^{-1/2}\mu <nm$,
thereby avoiding the above degeneracy, at the cost of a constant Poisson
translation. The expression (\ref{cltmu}) has the further advantage that the
dilation by $n^{-1/2}\leq 1$ is again defined by binomial thinning. This
leads us to the following analogue of the central limit theorem.

\begin{proposition}[Hermite Convergence Theorem]
\label{clt}Let $S_{n}=X_{1}+\cdots +X_{n}$ denote the partial sum for an
i.i.d. sequence of discrete random variables $X_{i}$ with $\mathrm{E}%
(X_{1})=m>0$ and $\mathrm{S}(X_{1})=\gamma >0$. Define the translated
standardized variable $Z_{n}\left( \mu \right) $ for $\mu \geq \gamma $ by%
\begin{equation*}
Z_{n}\left( \mu \right) =n^{-1/2}\cdot \left[ S_{n}\ominus \left(
nm-n^{1/2}\mu \right) \right]
\end{equation*}%
Then $Z_{n}\left( \mu \right) $ converges in distribution to the Hermite
distribution $\mathrm{PT}_{0}(\mu ,\gamma )$ as $n\rightarrow \infty $.
\end{proposition}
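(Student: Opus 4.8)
The plan is to imitate the proof of the Law of Thin Numbers (Proposition \ref{LTN}): I compute the FCGF of the standardized variable $Z_n(\mu)$ by purely operational means, show that it converges to the Hermite FCGF $\frac{\gamma}{2}t^2+\mu t$ of (\ref{geotrans}) near $t=0$, and then transfer this to convergence in distribution through the continuity theorem (Theorem 1 of \cite{Jensen1997}). The only ingredients are the three FCGF rules already established, namely additivity under convolution (\ref{plus}), the dilation rule (\ref{GCF}), and Poisson subtraction (\ref{substral}), together with the real-analyticity of $C(\cdot;X_1)$ at the origin and the identifications $C(0;X_1)=0$, $\dot{C}(0;X_1)=m$ and $\ddot{C}(0;X_1)=\gamma$.

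Writing $A_n=S_n\ominus(nm-n^{1/2}\mu)$ so that $Z_n(\mu)=n^{-1/2}\cdot A_n$, the additivity (\ref{plus}) and the subtraction rule (\ref{substral}) give $C(t;A_n)=nC(t;X_1)-(nm-n^{1/2}\mu)t$, and the dilation rule (\ref{GCF}) then yields
\begin{equation*}
C(t;Z_n(\mu))=nC(n^{-1/2}t;X_1)-(n^{1/2}m-\mu)t.
\end{equation*}
The decisive step is the second-order Taylor expansion $C(s;X_1)=ms+\frac{\gamma}{2}s^2+O(s^3)$, evaluated at $s=n^{-1/2}t$ and multiplied by $n$, which produces
\begin{equation*}
nC(n^{-1/2}t;X_1)=n^{1/2}mt+\frac{\gamma}{2}t^2+O(n^{-1/2}).
\end{equation*}
Upon substitution the two $n^{1/2}mt$ terms cancel, leaving $C(t;Z_n(\mu))=\frac{\gamma}{2}t^2+\mu t+O(n^{-1/2})$, which converges to (\ref{geotrans}) as $n\to\infty$. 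This cancellation is the discrete counterpart of centering in the central limit theorem and is exactly what the subtraction of $nm-n^{1/2}\mu$ was arranged to achieve; analyticity upgrades the pointwise convergence to uniform convergence on a neighbourhood of the origin. Having the limiting FCGF in hand, I would finish as in Proposition \ref{LTN}: since $0$ is interior to the relevant domains, Theorem 1 of \cite{Jensen1997} furnishes a weak limit $P$, and passing to moment generating functions via $\kappa(s)=C(e^s-1)$ identifies $P$ as the Hermite law $\mathrm{PT}_0(\mu,\gamma)$.

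The step I expect to cost real work — and the main obstacle — is the legitimacy of the Poisson subtraction, which, unlike the binomial thinning used in Proposition \ref{LTN}, is not automatic: the subtraction (\ref{substral}) is defined only when $nC(t;X_1)-(nm-n^{1/2}\mu)t$ is again a genuine FCGF, equivalently when $\exp[\,nC(u-1;X_1)-(nm-n^{1/2}\mu)(u-1)\,]$ is a probability generating function. For the continuity theorem to be applied to honest probability measures I must therefore establish that $Z_n(\mu)$ exists for all sufficiently large $n$. This is a Poisson-deconvolvability requirement on the law of $S_n$ that is not implied by the moment data alone; indeed it can fail, for instance when $X_1$ has bounded support, since then $S_n$ is bounded and admits no Poisson factor, even when $\gamma>0$. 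I would accordingly expect to need a supplementary admissibility condition on $X_1$ (for example that it carry a suitable Poisson component) beyond $m>0$ and $\gamma>0$, whereas the stated hypothesis $\mu\ge\gamma$ plays the separate role of guaranteeing that the limit $\mathrm{PT}_0(\mu,\gamma)$ is a valid Hermite law, its log-PGF coefficients $\gamma$ and $\mu-\gamma$ being nonnegative. Making this finite-$n$ admissibility precise is where the remaining effort lies.
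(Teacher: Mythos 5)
Your proof is essentially the paper's own: the same operational computation leading to $C(t;Z_{n}(\mu ))=\mu t+\tfrac{\gamma }{2}t^{2}+O(n^{-1/2})$ followed by the same appeal to Theorem 1 of \cite{Jensen1997} to upgrade FCGF convergence to convergence in distribution, so the proposal is correct and takes the same route. The finite-$n$ existence of the Poisson subtraction that you single out as the remaining obstacle is a genuine issue (as you note, a bounded-support overdispersed $X_{1}$ makes $S_{n}$ bounded and hence without any Poisson factor), but the paper does not resolve it either: it introduces (\ref{cltmu}) only \emph{formally} and its proof, like yours, tacitly assumes $Z_{n}(\mu )$ exists, so your argument supplies everything the published proof does.
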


\begin{proof}
The proof follows by expanding the FCGF of $Z_{n}\left( \mu \right) $ as
follows: 
\begin{equation*}
C(t;Z_{n}\left( \mu \right) )=\mu t+\frac{\gamma }{2}t^{2}+O\left(
n^{-1/2}\right) \text{,}
\end{equation*}%
which shows that the Hermite FCGF (\ref{geotrans}) with $\mu \geq \gamma $
appears in the limit as $n\rightarrow \infty $. Using once more the results
of \cite{Jensen1997}, we conclude that $Z_{n}\left( \mu \right) $ converges
in distribution to the Hermite distribution $\mathrm{PT}_{0}(\mu ,\gamma )$. 
\end{proof}

A number of further convergence results will be considered in Section \ref%
{sec5}.

\subsection{Poisson mixtures and infinite dilatability\label{sec:Mixtures}}

We now discuss the relation between infinite dilatability and Poisson
mixtures. If $X$ is a non-negative random variable, and $a\geq 0$, we define
the Poisson mixture $P\left( X;a\right) $ by the following conditional
distribution: 
\begin{equation*}
P\left( X;a\right) |X\sim \mathrm{Po}(aX),
\end{equation*}%
see also \cite{Karlis2005}. The corresponding conditional moment generating
function (MGF) is 
\begin{equation*}
\mathrm{E}\left( e^{sP\left( X;a\right) }|X\right) =\exp \left( aX\left(
e^{s}-1\right) \right) .
\end{equation*}%
Hence $P\left( X;a\right) $ has MGF 
\begin{equation*}
\mathrm{E}\left( e^{sP\left( X;a\right) }\right) =\mathrm{E}\left[ \exp
\left( aX\left( e^{s}-1\right) \right) \right] =e^{\kappa (a\left(
e^{s}-1\right) ;X)}\text{,}
\end{equation*}%
which implies that 
\begin{equation}
C(t;P\left( X;a\right) )=\kappa (ta;X)\text{.}  \label{mixexpo}
\end{equation}%
It follows that the factorial cumulants of $P\left( X;a\right) $ are
obtained by scaling the ordinary cumulants for $X$. In particular, the first
and second factorial cumulants are%
\begin{equation*}
\mathrm{E}\left( P\left( X;a\right) \right) =a\mathrm{E}\left( X\right) 
\text{ and }\mathrm{S}\left( P\left( X;a\right) \right) =a^{2}\mathrm{Var}%
\left( X\right) \text{,}
\end{equation*}%
making $P\left( X;a\right) $ overdispersed, unless $aX$ is degenerate.

In view of the scaling property $\kappa (ta;X)=\kappa (t;aX)$ of the CGF it
follows from (\ref{mixexpo}) that any Poisson mixture $P\left( X;a\right) $
is infinitely dilatable. The following result also contains the converse
implication, similar to Theorem 3.1 of \cite{Jorgensen2010} for geometric
infinite divisibility. This is an important prerequisite for our discussion
of factorial tilting families in Section \ref{sec3}. We recall the
definitions of the space $\mathcal{K}$ of CGFs and the space $\mathcal{C}$
of FCGFs, cf. Section \ref{sec:general}.

\begin{theorem}
\label{thinfdiv}Let the FCGF $C\in \mathcal{C}$ be given. Then the following
conditions are equivalent:

\begin{enumerate}
\item $C$ is an infinitely dilatable FCGF;

\item $C\left( c\,\cdot \right) \in \mathcal{C}$ for any $c>0$;

\item $C\left( c\,\cdot \right) \in \mathcal{K}$ for any $c>0$;

\item $C$ is the FCGF for a Poisson mixture $P\left( X;a\right) $.
\end{enumerate}
\end{theorem}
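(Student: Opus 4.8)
The plan is to prove the cycle (1)$\Rightarrow$(2)$\Rightarrow$(3)$\Rightarrow$(4)$\Rightarrow$(1), with the first two implications essentially formal and the bulk of the work concentrated in (3)$\Rightarrow$(4). Let $Y$ denote the random variable with FCGF $C$. The equivalence (1)$\Leftrightarrow$(2) is immediate from the definitions of Section~\ref{sec:Dil}: by (\ref{GCF}) the dilation $c\cdot Y$ exists precisely when the function $C(c\,\cdot)$ is again an FCGF, so the statement that $c\cdot Y$ exists for every $c>0$ is literally the statement that $C(c\,\cdot)\in\mathcal{C}$ for every $c>0$. For (2)$\Leftrightarrow$(3) I would invoke the one-to-one correspondence $\kappa(s)=C(e^{s}-1)\leftrightarrow C(t)=\kappa(\log(1+t))$ between $\mathcal{K}$ and $\mathcal{C}$ recorded in Section~\ref{sec:general}: the dilated object is a legitimate FCGF exactly when the generating function associated with it through (\ref{see}) is a legitimate CGF, so for each $c>0$ the two membership statements carry the same information.

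The implication (4)$\Rightarrow$(1) is the observation already made in Section~\ref{sec:Mixtures}. If $C$ is the FCGF of a Poisson mixture $P(X;a)$ with $X\geq 0$, then by (\ref{mixexpo}) we have $C(t)=\kappa(ta;X)$, and dilating by $c$ gives
\begin{equation*}
C(ct)=\kappa(cta;X)=C(t;P(X;ca)),
\end{equation*}
which is the FCGF of the Poisson mixture $P(X;ca)$ and hence lies in $\mathcal{C}$ for every $c>0$. Thus $C$ is infinitely dilatable, and the real burden of the proof is the converse.

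For the remaining implication (3)$\Rightarrow$(4) I would argue as follows. Write $\phi(\theta)=\exp C(-\theta)=\mathrm{E}[(1-\theta)^{Y}]$ for $\theta\ge 0$, the value of the generating function to the left of $1$; the goal is to produce a non-negative random variable whose Laplace transform is $\phi$. The point is that infinite dilatability is exactly the hypothesis that makes $\phi$ \emph{completely monotone}: demanding that $C(c\,\cdot)$ be an FCGF for \emph{every} $c>0$, and not merely for $c\le 1$ (binomial thinning), forces every derivative of the generating function to the left of $1$ to have the correct sign, which is precisely $(-1)^{n}\phi^{(n)}(\theta)\ge 0$. Granting this, Bernstein's theorem yields a measure $\nu$ on $[0,\infty)$ with $\phi(\theta)=\int_{0}^{\infty}e^{-\theta x}\,\mathrm{d}\nu(x)$; since $\phi(0)=e^{C(0)}=1$, the measure $\nu$ is a probability distribution, say of a non-negative variable $X$, and $C(-\theta)=\log\mathrm{E}(e^{-\theta X})=\kappa(-\theta;X)$. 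By real analyticity this identity propagates to $C(t)=\kappa(t;X)$ on the common domain, which by (\ref{mixexpo}) says that $C$ is the FCGF of the Poisson mixture $P(X;1)$, establishing (4).

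The main obstacle is exactly the complete-monotonicity step just described: extracting genuine \emph{non-negativity} of the mixing variable, rather than merely the existence of a signed generating measure. This is the feature that separates infinitely dilatable laws (such as the negative binomial, whose $\phi$ is the completely monotone exponential transform) from superficially similar ones that are not, such as the Hermite law, whose associated generating function ceases to be absolutely monotone far to the left of $1$ even though its formal dilations remain CGFs. To make the sign step rigorous I would lean on the closure-under-weak-limits machinery of \cite{Jensen1997} already used above, applied to the dilation family $\{c\cdot Y\}_{c>0}$ as $c\to\infty$, in direct analogy with Theorem~3.1 of \cite{Jorgensen2010} for geometric infinite divisibility; this simultaneously supplies the limiting mixing law and its non-negativity.
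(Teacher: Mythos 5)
Your handling of (1)$\Leftrightarrow$(2) and of (4)$\Rightarrow$(1) coincides with the paper's proof, but the two middle steps contain genuine gaps. The claim that (2)$\Leftrightarrow$(3) follows at once from the correspondence (\ref{see}) confuses two different objects: that correspondence associates to the FCGF $C(c\,\cdot)$ the CGF $s\mapsto C(c(e^{s}-1))$, \emph{not} the function $C(c\,\cdot)$ itself. Thus ``$C(c\,\cdot)\in\mathcal{C}$'' is equivalent to ``$C(c(e^{\cdot}-1))\in\mathcal{K}$'', which is a different assertion from ``$C(c\,\cdot)\in\mathcal{K}$'', and the two membership statements in conditions (2) and (3) do not carry the same information. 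This is exactly where the paper's proof does real work: from condition (2), $C(n\,\cdot)\in\mathcal{C}$ for every integer $n$, so $s\mapsto C(n(e^{sc/n}-1))$ is a CGF, and letting $n\to\infty$ exhibits $C(c\,\cdot)$ as a limit of CGFs, hence in $\mathcal{K}$. Your own closing remark undercuts your shortcut: you observe that for the Hermite law the formal dilations remain CGFs for every $c$ (indeed $\frac{\gamma}{2}t^{2}+\mu t$ is the CGF of $N(\mu ,\gamma )$), yet the Hermite law is not infinitely dilatable --- which is precisely a demonstration that membership of the dilated functions in $\mathcal{K}$ and in $\mathcal{C}$ are not interchangeable.

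Second, your (3)$\Rightarrow$(4) never actually uses hypothesis (3): the complete-monotonicity of $\phi$ is explicitly derived from ``demanding that $C(c\,\cdot)$ be an FCGF for every $c>0$'', i.e.\ from (1)/(2). So what you sketch is (2)$\Rightarrow$(4), and your cycle does not close; condition (3) ends up connected to nothing once the faulty (2)$\Leftrightarrow$(3) step is removed. Moreover the decisive inequality $(-1)^{n}\phi^{(n)}(\theta )\geq 0$ is introduced by ``Granting this'' and never proved; the appeal to weak limits of the dilation family as $c\to\infty$ produces (just as in the paper's (2)$\Rightarrow$(3) argument) a random variable $X$ whose CGF is $C$, but weak convergence alone does not force $X\geq 0$, which is what Bernstein's theorem requires. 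By contrast, the paper's (3)$\Rightarrow$(4) is a one-line identification: by (3) with $c=1$, $C$ is the CGF of some $X$, and (\ref{mixexpo}) with $a=1$ exhibits $C$ as the FCGF of the Poisson mixture $P(X;1)$. The paper is itself silent on why this $X$ is non-negative --- the very point your Bernstein strategy is aimed at, and the point your Hermite example shows is not automatic from (3) as literally stated --- so carrying out the sign step rigorously would repair a real weak spot; but as written your argument is a plan, not a proof.
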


\begin{proof}
1. $\Leftrightarrow $ 2.: This is the definition of infinite dilatability.

2. $\Rightarrow $ 3.: Condition 2. implies that $C(n\,\cdot )\in \mathcal{C}$
for any integer $n$, so in view of (\ref{see}) we find that $C(n\left(
e^{sc/n}-1\right) )$ is a CGF for any $c>0$. Letting $n\rightarrow \infty $
we obtain in the limit the function $C\left( c\,\cdot \right) $, which is
hence a CGF for any $c>0,$ implying 3.

3. $\Rightarrow $ 4.: By condition 3. we have that $C$ is a CGF. The
corresponding Poisson mixture (\ref{mixexpo}) has FCGF $C\left( \cdot
\right) $, which implies 4.

4. $\Rightarrow $ 1.: This implication follows from (\ref{mixexpo}) because $%
C(ct;P\left( X;a\right) )=C(t;P\left( X;ca\right) )$ for any $c>0$, which in
turn implies that the Poisson mixture $P\left( X;a\right) $ is infinitely
dilatable. This completes the proof.
\end{proof}

\begin{corollary}
Any infinitely dilatable FCGF $C\in \mathcal{C}$ is convex, and strictly
convex except in the Poisson case.
\end{corollary}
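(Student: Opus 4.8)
The plan is to reduce the claim to the general convexity properties of cumulant generating functions recalled in Section~\ref{sec:general}, using the equivalences of Theorem~\ref{thinfdiv}. The key observation is that condition~3 of that theorem states $C(c\,\cdot)\in\mathcal{K}$ for \emph{every} $c>0$; specializing to $c=1$ shows that the infinitely dilatable FCGF $C$ is itself a genuine CGF, i.e. $C\in\mathcal{K}$.

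With $C$ recognized as a CGF, convexity is immediate, since every member of $\mathcal{K}$ is a real analytic convex function, as noted in Section~\ref{sec:general}. For the strict convexity I would again invoke the fact recalled there that a CGF is strictly convex unless the underlying random variable is degenerate. Hence the only way for $C$ to fail strict convexity is for it, viewed as a CGF, to be linear, say $C(t)=bt$.

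It then remains to identify this exceptional case with the Poisson distribution. Substituting $u=1+t$ into the defining relation~(\ref{see}) turns $C(t)=bt$ into $\mathrm{E}(u^{X})=e^{b(u-1)}$, the PGF of $\mathrm{Po}(b)$; in particular $b\ge 0$, so $C(t)=\mu t$ with $\mu=b$ is exactly the Poisson FCGF~(\ref{exponential}). Conversely, the Poisson FCGF is linear and hence not strictly convex. The main (and essentially only) obstacle is this last identification: one must verify both that a linear FCGF forces a nonnegative slope, so that it is a bona fide Poisson FCGF, and that the Poisson FCGF is the unique linear member of $\mathcal{C}$. Both follow at once from the PGF computation, so the corollary reduces to a short consequence of Theorem~\ref{thinfdiv} together with the convexity of CGFs.
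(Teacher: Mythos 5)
Your proof is correct and follows essentially the same route as the paper: invoke Theorem~\ref{thinfdiv} (condition 3 with $c=1$) to conclude $C\in\mathcal{K}$, deduce convexity and strict convexity up to the linear case from the general properties of CGFs, and identify the linear FCGF $C(t)=\mu t$, $\mu\geq 0$, with the Poisson case~(\ref{exponential}). Your only addition is spelling out the PGF computation behind that last identification, which the paper leaves implicit.
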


\begin{proof}
Since $C\in \mathcal{K}$ in the infinitely dilatable case, it follows that $%
C $ is convex, and strictly convex unless $C(t)=\mu t$ for some $\mu \geq 0$%
, corresponding to the Poisson case (\ref{exponential}).
\end{proof}

For example, the geometric FCGF (\ref{geometric}), being a Poisson mixture
and hence infinitely dilatable, is strictly convex. Conversely, the
Bernoulli FCGF (\ref{geoc}), being strictly concave, is not infinitely
dilatable, as we already know.

A Poisson mixture $P\left( X;a\right) $ may be expressed as a weighted
Poisson distribution (\cite{Kokonendji2008,Kokonendji2012}) 
\begin{equation}
f_{w}(x;\mu )=\frac{w(x;\mu )\mu ^{x}e^{-\mu }}{\mathrm{E}_{\mu }\left[ w(X)%
\right] x!}\text{ for }x=0,1,\ldots  \label{prob}
\end{equation}%
where the weights have the form 
\begin{equation*}
w(x;\mu )=e^{\mu }\left( -1\right) ^{x}\frac{d^{x}}{d\mu ^{x}}\mathrm{E}%
\left( e^{-\mu X}\right) \text{.}
\end{equation*}%
The probabilities (\ref{prob}) may hence be calculated from the MGF for $X$.

\subsection{The M-transformation and over/underdispersion\label{sec:M-trans}}

We now introduce a transformation that in some cases transforms
underdispersion into overdispersion and vice-versa. Let us consider the
reflected variable$-X$ with FCGF%
\begin{equation*}
C(t;-X)=\log \mathrm{E}\left[ \left( 1+t\right) ^{-X}\right] =C\left( \frac{%
-t}{1+t};X\right) \text{ for }t>-1.
\end{equation*}%
Also recall that the dilation operator\ $X\longmapsto c\cdot X$ is defined
by $C(t;c\cdot X)=C(ct;X),$ for those $c>0$ for which $C(ct;X)$ is an FCGF.
The reflection and dilation operations do not commute, so that in general $%
c\cdot \left( -X\right) $ (if it exists) is different from $-c\cdot X$. We
hence define the \emph{M-transformation} by 
\begin{equation*}
X_{a}=c^{-1}\cdot \left[ -c\cdot \left( -X\right) \right]
\end{equation*}%
where $a=\left( 1-c\right) /c>-1$. The corresponding FCGF is%
\begin{equation*}
C(t;X_{a})=C\left( \frac{t}{1+at};X\right) \text{.}
\end{equation*}%
The inverse M-transformation is defined by%
\begin{equation*}
X_{a}=-c^{-1}\cdot \left( -c\cdot X\right) \text{,}
\end{equation*}%
where $a=\left( c-1\right) /c<1$.

The first and second factorial cumulants for $X_{a}$ are%
\begin{equation*}
\mathrm{E}\left( X_{a}\right) =\mathrm{E}\left( X\right) \text{ \qquad
and\qquad\ }\mathrm{S}\left( X_{a}\right) =\mathrm{S}\left( X\right) -2a%
\mathrm{E}\left( X\right) \text{.}
\end{equation*}%
It follows that the M-transformation may result in both overdispersion and
underdispersion, depending on whether $2a\mathrm{E}\left( X\right) $ is
smaller or bigger than $\mathrm{S}\left( X\right) $. The following example
shows a case where the M-transformation maps overdispersion into
underdispersion. Consider the negative binomial FCGF $-n\log \left( 1-\mu
t\right) $ (with integer $n$). For $a=\mu \in (0,1),$ the M-transformation
maps the negative binomial FCGF into the binomial FCGF $n\log \left( 1+\mu
t\right) $. The corresponding inverse transformation is obtained for $a=-\mu 
$.

The M-transformation hence provides one more tool in the study of
over/underdispersion, see also \cite{Kokonendji2008}, who used weighted
Poisson distributions for this purpose. An application of the
M-transformation is given in Section \ref{sec4.9}.

\section{Factorial tilting and factorial dispersion models\label{sec3}}

We now introduce a factorial tilting operator, similar to exponential
tilting, which leads to our main definitions of factorial tilting families
and factorial dispersion models, providing discrete analogues of
conventional exponential tilting and exponential dispersion models,
respectively, as well as to the geometric dispersion models of \cite%
{Jorgensen2010}. We also introduce the class of Poisson-Tweedie factorial
dispersion models, which provide a parallel with the Tweedie class of
exponential dispersion models, see 
%TCIMACRO{\TeXButton{citet Jorgensen1997}{\citet[Ch.~3--4]{Jorgensen1997}}}%
%BeginExpansion
\citet[Ch.~3--4]{Jorgensen1997}%
%EndExpansion
. In the following we use the notation of Section \ref{sec:general}.

\subsection{Factorial tilting families}

Consider the set $\overline{\mathcal{K}}$ of real analytic functions $K:%
\mathrm{dom}(K)\rightarrow \mathbb{R}$ satisfying $0\in \mathrm{dom}(K)$ and 
$K(0)=0$, where $\mathrm{dom}(K)$ denotes the largest interval containing
zero where $K$ is analytic. We define the \emph{tilting} of $K$ by the
amount $\theta \in \mathrm{dom}(K)$ as the function $K_{\theta }:\mathrm{dom}%
(K_{\theta })\rightarrow \mathbb{R}$ given by 
\begin{equation*}
K_{\theta }(t)=K(\theta +t)-K(\theta )\text{ for }t\in \mathrm{dom}%
(K_{\theta })=\mathrm{dom}(K)-\theta \text{.}
\end{equation*}%
The tilting operator defines an equivalence relation on $\overline{\mathcal{K%
}}$. In particular, if $\kappa \in \mathcal{K}$, then $\kappa _{\theta }$ is
the conventional exponential tilting of $\kappa $ 
%TCIMACRO{%
%\TeXButton{citep Jorgensen 1997, p. 43}{\citep[cf.][p.~43]{Jorgensen1997}}}%
%BeginExpansion
\citep[cf.][p.~43]{Jorgensen1997}%
%EndExpansion
. If we restrict the tilting operator to $\mathcal{K}$, the corresponding
set of equivalence classes form the class of natural exponential families,
i.e. CGF families of the form $\left\{ \kappa _{\theta }\in \mathcal{K}%
:\theta \in \mathrm{dom}(\kappa )\right\} $ for given $\kappa \in \mathcal{K}
$. The corresponding natural exponential family has PDFs of the form 
\begin{equation*}
f(x;\theta )=g(x)\exp \left[ \theta x-\kappa (\theta )\right] \text{ for }%
\theta \in \mathrm{dom}(\kappa )
\end{equation*}%
with respect to a suitable dominating measure, where $g$ is the PDF
corresponding to $\kappa $.

Let us now instead consider the restriction of the tilting operator to the
class of FCGFs $\mathcal{C}$. We call this the \emph{factorial tilting
operator}. The corresponding set of equivalence classes in $\mathcal{C}$ are
called \emph{factorial tilting families}, namely FCGF families of the form 
\begin{equation*}
\left\{ C_{\theta }\in \mathcal{C}:\theta \in \mathrm{dom}(C)\right\}
\end{equation*}%
for given $C\in \mathcal{C}$. Note that when $C\in \mathcal{C}$, $\mathrm{dom%
}(C)$ is restricted to the interval $t>-1$. The distribution with FCGF $%
C_{\theta }$ has mean $\mu =\dot{C}(\theta ),$ and dispersion $\ddot{C}%
(\theta )$.

The factorial and exponential tilting operators turn out to be related by
means of dilation. Thus, for given $\theta =e^{\phi }-1\in \mathrm{dom}(C)$,
and for $C(t)=\kappa (\log \left( 1+t\right) ;X)$ as in (\ref{see}), we
obtain 
\begin{eqnarray}
C_{\theta }(t) &=&C(\theta +t)-C(\theta )  \notag \\
&=&\kappa (\log \left( 1+\theta +t\right) )-\kappa (\log \left( 1+\theta
\right) )  \notag \\
&=&\kappa \left( \log \left( 1+\frac{t}{1+\theta }\right) +\log \left(
1+\theta \right) \right) -\kappa (\log \left( 1+\theta \right) )  \notag \\
&=&\kappa _{\phi }\left( \log \left( 1+te^{-\phi }\right) \right) \text{.}
\label{diller}
\end{eqnarray}%
The form (\ref{diller}) is an exponential tilting of $\kappa $, followed by
a dilation. Conversely, the exponential tilting $\kappa _{\phi }$
corresponds to the FCGF 
\begin{equation*}
\kappa _{\phi }(\log \left( 1+t\right) )=C_{\theta }(te^{\phi }),
\end{equation*}%
which is a factorial tilting followed by a dilation.

In the special case of binomial thinning, we now derive the corresponding
expression for the probability mass function (PMF) of a discrete model. If $%
f $ is a given PMF, then the binomial thinning by $c$ has PMF%
\begin{equation*}
f_{c}(x)=\sum_{i=x}^{\infty }f(i)\tbinom{i}{x}c^{x}(1-c)^{i-x}\text{.}
\end{equation*}%
Now the exponential tilting by $\log (1+\theta )$ has density%
\begin{equation*}
f(x;\theta )=g(x)\left( 1+\theta \right) ^{x}e^{-C(\theta )}\text{.}
\end{equation*}%
Now take $c=1/(1+\theta )$ so that $1-c=\theta /(1+\theta ).$ The binomial
thinning by $c$ is then 
\begin{eqnarray*}
f_{c}(x;\theta ) &=&\sum_{i=x}^{\infty }g(i)\left( 1+\theta \right) ^{i}%
\tbinom{i}{x}\left( 1+\theta \right) ^{-x}\left[ \theta /(1+\theta )\right]
^{i-x}e^{-C(\theta )} \\
&=&\sum_{i=x}^{\infty }g(i)\tbinom{i}{x}\theta ^{i-x}e^{-C(\theta )}\text{.}
\end{eqnarray*}%
This is the PMF of the binomial thinning of the exponential tilting for $%
\theta >0$.

\subsection{Dispersion functions}

For a natural exponential family generated from the CGF $\kappa $, the
variance function $V=\ddot{\kappa}\circ \dot{\kappa}^{-1}$ is known to be a
useful characterization and convergence tool. We now introduce the
dispersion function for factorial tilting families, and show that it has
similar properties.

Let the FCGF $C=\kappa \left( \log \left( 1+\cdot \right) \right) \in 
\mathcal{C}$ be given, and let $\left\{ C_{\theta }:\theta \in \mathrm{dom}%
(C)\right\} $ be the factorial tilting family generated by $C$. All
factorial cumulants of $C_{\theta }$ are finite for $\theta \in \mathrm{int}%
\left( \mathrm{dom}(C)\right) $, the first two being the mean%
\begin{equation*}
\mu =\dot{C}_{\theta }(0)=\dot{C}(\theta )=\dot{\kappa}(\log \left( \theta
+1\right) )/\left( \theta +1\right)
\end{equation*}%
and the dispersion 
\begin{equation*}
\ddot{C}_{\theta }(0)=\ddot{C}(\theta )=\frac{\ddot{\kappa}(\log \left(
\theta +1\right) )-\dot{\kappa}(\log \left( \theta +1\right) )}{\left(
\theta +1\right) ^{2}}.
\end{equation*}%
Let $\Theta _{0}\subseteq \mathrm{dom}(C)$ be a non-degenerate interval
where $\ddot{C}(\theta )$ has constant sign, such that $\dot{C}(\theta )$ is
strictly monotone on $\Theta _{0}$, with $\mu =\dot{C}(\theta )$ belonging
to the interval $\Psi _{0}=\dot{C}(\Theta _{0})$. Here we define $\mu $ by
continuity at any end-point of $\Theta _{0}$ contained in $\Theta _{0}$ \cite%
[p.~46]{Jorgensen1997}, allowing infinite values of $\mu $, if necessary. We
say that the family is \emph{locally overdispersed }or\emph{\ locally
underdispersed} on $\Theta _{0}$, depending on the sign of $\ddot{C}(\theta
) $. We may then parametrize the family locally by the mean $\mu $, and we
denote the corresponding family member by $\mathrm{FT}(\mu )$. For a
globally overdispersed or underdispersed family, we may parametrize the
family globally by $\mu \in \Psi =\dot{C}(\mathrm{dom}(C))$. We adopt the
convention that for each $\mu \geq 0$, the Poisson distribution $\mathrm{Po}%
(\mu )$ forms an equidispersed factorial tilting family.

\begin{theorem}
\label{vtoc}Consider a locally overdispersed (underdispersed) factorial
tilting family and define the \emph{local dispersion function} $v:\Psi
_{0}\rightarrow \mathbb{R}$ by 
\begin{equation}
v(\mu )=\ddot{C}\circ \dot{C}^{-1}(\mu )\text{ for }\mu \in \Psi _{0}\text{,}
\label{defv}
\end{equation}%
where $v$ is defined by continuity at endpoints of $\Psi _{0}$ belonging to $%
\Psi _{0}$, and where $v(\mu )$ is positive (negative) for all $\mu \in \Psi
_{0}$. Then $v$ characterizes the family among all factorial tilting
families.
\end{theorem}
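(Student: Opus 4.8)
The plan is to mimic the classical characterization of a natural exponential family by its variance function $V = \ddot\kappa \circ \dot\kappa^{-1}$ \citep[p.~46]{Jorgensen1997}, transporting that argument to the factorial setting. The essential preliminary observation is that the dispersion function is an invariant of the whole tilting family: tilting $C$ by an amount $\theta_1$ replaces the pair $(\dot C(\theta), \ddot C(\theta))$ by $(\dot C(\theta_1+\theta), \ddot C(\theta_1+\theta))$, which merely reparametrizes the graph $\{(\dot C(\theta), \ddot C(\theta)) : \theta \in \mathrm{dom}(C)\}$ and hence leaves the map $\mu \mapsto \ddot C \circ \dot C^{-1}(\mu)$ unchanged. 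So $v$ is genuinely attached to the family, and it suffices to show that $v$, together with the normalization $C(0)=0$ forced on every FCGF, determines $C$ up to factorial tilting.

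First I would turn the defining relation (\ref{defv}) into a differential equation for the canonical parameter. Writing $\mu = \dot C(\theta)$ and differentiating gives $d\mu/d\theta = \ddot C(\theta) = v(\mu)$, so that on $\Theta_0$, where $\ddot C$ has constant sign and $\dot C$ is therefore strictly monotone, we have $d\theta/d\mu = 1/v(\mu)$. Since $v$ is of one sign on $\Psi_0$, the right-hand side is integrable in the interior, and integrating recovers the inverse mean mapping $\dot C^{-1}(\mu) = \int^\mu v(s)^{-1}\, ds + \mathrm{const}$. The integration constant is precisely the freedom of choosing the origin of the canonical parameter, i.e. the freedom of factorial tilting; fixing it and integrating once more, using $C(0)=0$, reconstructs $C$ itself.

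For the characterization statement I would then argue uniqueness directly. Suppose $C$ and $\tilde C$ generate two locally over- (under-) dispersed factorial tilting families sharing the dispersion function $v$ on the same $\Psi_0$. Both $\dot C^{-1}$ and $\dot{\tilde C}^{-1}$ solve $d\theta/d\mu = 1/v(\mu)$, hence differ by a constant $c_0$, giving $\dot C(\theta) = \dot{\tilde C}(\theta - c_0)$ for $\theta$ in the relevant interval. The right-hand side equals $\dot{(\tilde C_{-c_0})}(\theta)$, the derivative of the factorial tilting of $\tilde C$ by $-c_0$. Since $C$ and $\tilde C_{-c_0}$ are both real analytic with equal derivatives on a non-degenerate interval, the identity theorem forces $\dot C = \dot{(\tilde C_{-c_0})}$ throughout the common domain, and matching values at $0$ via $C(0) = 0 = (\tilde C_{-c_0})(0)$ yields $C = \tilde C_{-c_0}$. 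Thus $C$ lies in the tilting family of $\tilde C$, so the two families coincide.

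The main obstacle will be the bookkeeping around domains rather than the ODE itself: I must check that $-c_0$ indeed lies in $\mathrm{dom}(\tilde C)$ so that $\tilde C_{-c_0}$ is a bona fide member of $\mathcal{C}$, that the constant-sign interval $\Theta_0$ and its image $\Psi_0$ are matched correctly across the two families, and that the endpoint conventions (defining $\mu$ and $v$ by continuity, allowing infinite $\mu$) do not disrupt the analytic-continuation step. These are the same technical points handled in the exponential case, and I expect them to go through using the strict monotonicity of $\dot C$ on $\Theta_0$ together with the real analyticity guaranteed by membership in $\mathcal{C}$.
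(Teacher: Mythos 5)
Your proposal is correct and takes essentially the same route as the paper's own proof: both first verify that $v$ is invariant under tilting (so it is an intrinsic property of the family), and both then invert the relation via the differential equation $\frac{d\dot{C}^{-1}}{d\mu}(\mu)=\frac{1}{v(\mu)}$, identifying the integration constant with the tilting parameter $\theta$ and recovering $C_{\theta}$ from the initial condition $C_{\theta}(0)=0$. The only difference is presentational: you phrase the inversion as a uniqueness comparison of two families $C$ and $\tilde{C}$ (closed off with the identity theorem for real analytic functions), whereas the paper states it constructively, noting that $C_{\theta}$ is an FCGF if and only if $\theta\in\mathrm{dom}(C)$, which is the same observation that settles your domain-bookkeeping concern.
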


\begin{proof}
The proof is similar to the proof that a natural exponential family is
characterized by its variance function \cite[p.~51]{Jorgensen1997}. We first
show that the dispersion function does not depend on the choice of the FCGF $%
C$ representing the family. Thus, for given $\theta \in \Theta _{0}$, let us
derive the local dispersion function corresponding to $C_{\theta _{0}}$. For 
$t\in \mathrm{dom}(C)-\theta $ we obtain $\dot{C}_{\theta }(t)=\dot{C}%
(\theta +t),$ so that $\dot{C}_{\theta }(\Theta _{0}-\theta )=\dot{C}(\Theta
_{0})=\Psi _{0}$. The second derivative is $\ddot{C}_{\theta }(t)=\ddot{C}%
(\theta +t)$, and hence 
\begin{equation*}
\ddot{C}_{\theta }\circ \dot{C}_{\theta }^{-1}(\mu )=\ddot{C}\circ \dot{C}%
^{-1}(\mu )=v(\mu )\text{ for }\mu \in \Psi _{0}\text{.}
\end{equation*}%
It follows that $C_{\theta _{0}}$ yields the same local dispersion function
as $C$, so that $v$ represents an intrinsic property of the family.
To see that $v$ characterizes the family among all factorial tilting
families, we derive an inversion formula for $v$, again similar to the
inversion formula for the variance function. If the FCGF $C$ satisfies (\ref%
{defv}), then $\dot{C}^{-1}$ satisfies the equation%
\begin{equation*}
\frac{d\dot{C}^{-1}}{d\mu }(\mu )=\frac{1}{\ddot{C}\circ \dot{C}^{-1}(\mu )}=%
\frac{1}{v(\mu )}\text{.}
\end{equation*}%
For given $v$, the set of solutions to this equation are of the form $\dot{C}%
^{-1}(\mu )-\theta $, where $-\theta $ is an arbitrary constant. By solving
the equation $t=\dot{C}^{-1}(\mu )-\theta $ with respect to $\mu $ we obtain 
$\mu =\dot{C}(\theta +t)$, and integration with respect to $t$ in turn
yields the function $C_{\theta }(t)=C(\theta +t)-C(\theta )$ satisfying the
initial condition $C_{\theta }(0)=0$. Since $C_{\theta }$ is an FCGF if and
only if $\theta \in \mathrm{dom}(C)$, we have thus recovered the factorial
tilting family generated by $C$, as desired.
\end{proof}

For a globally overdispersed or underdispersed family, we refer to $v$ as
simply the dispersion function. The fact that a factorial tilting family is
characterized by the relations between its first two factorial cumulants
provides an example of a family with finitely generated cumulants in the
sense of \cite{Pistone1999}. \cite{Khatri1959} provides an early example of
a characterization of this form. Note that positive $v$ means that all
members of the family are overdispersed, negative $v$ means that all members
are underdispersed, whereas zero $v$ characterizes the Poisson family. The
next result shows that many important factorial tilting families are Poisson
mixtures, and hence overdispersed.

\begin{proposition}
\label{samevefu}The family of Poisson mixtures (\ref{mixexpo}) generated
from a natural exponential family with variance function $V$ yields an
overdispersed factorial tilting family with dispersion function $v=V$.
\end{proposition}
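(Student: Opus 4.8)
The plan is to use the fundamental identity (\ref{mixexpo}) to write down the FCGF of the Poisson-mixture family explicitly, recognize that family as a factorial tilting family, and then read off its dispersion function directly from the definition (\ref{defv}). The cleanest route exploits the fact that Poisson mixing with scale $a=1$ preserves both the first factorial cumulant (the mean) and the second (the dispersion), so that the dispersion-versus-mean relation of the mixtures must coincide with the variance-versus-mean relation of the family we started from.

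First I would fix the natural exponential family by a generating CGF $\kappa\in\mathcal{K}$, with members $\{\kappa_{\phi}:\phi\in\mathrm{dom}(\kappa)\}$ and variance function $V=\ddot{\kappa}\circ\dot{\kappa}^{-1}$. By (\ref{mixexpo}) with $a=1$, the Poisson mixture $P(X_{\phi};1)$ of the member $X_{\phi}$ (whose CGF is $\kappa_{\phi}$) has FCGF equal to the function $\kappa_{\phi}$ itself; being the FCGF of a genuine distribution, it lies in $\mathcal{C}$. Since factorial tilting and exponential tilting are literally the same functional operation $K\mapsto K(\theta+\cdot)-K(\theta)$, restricted to $\mathcal{C}$ and $\mathcal{K}$ respectively, and since here $C=\kappa$ is simultaneously a CGF and an FCGF (this is the content of conditions 3 and 4 of Theorem \ref{thinfdiv}), the collection $\{\kappa_{\phi}\}$ is exactly the factorial tilting family generated by $C=\kappa$. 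Thus the Poisson mixtures of a natural exponential family do form a factorial tilting family, as the statement asserts.

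Next I would compute the dispersion function from (\ref{defv}). With $C=\kappa$ one has $\dot{C}=\dot{\kappa}$ and $\ddot{C}=\ddot{\kappa}$, whence
\[
v(\mu)=\ddot{C}\circ\dot{C}^{-1}(\mu)=\ddot{\kappa}\circ\dot{\kappa}^{-1}(\mu)=V(\mu).
\]
Equivalently, and more transparently, I could invoke the factorial-cumulant formulas recorded just before the proposition: with $a=1$ the mixture $P(X_{\phi};1)$ has mean $\mathrm{E}(X_{\phi})$ and dispersion $\mathrm{Var}(X_{\phi})$, so expressing the dispersion as a function of the mean yields $\mathrm{Var}(X_{\phi})=V(\mathrm{E}(X_{\phi}))$, i.e.\ $v=V$. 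Since $V>0$ for any non-degenerate natural exponential family, $v>0$ throughout, and by the characterization stated just before the proposition a positive dispersion function means that every member of the family is overdispersed.

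The step requiring the most care is the identification of the Poisson-mixture family with a factorial tilting family: one must match the tilting-parameter ranges, since exponential tilting allows $\phi\in\mathrm{dom}(\kappa)$ whereas FCGFs are constrained to the interval $t>-1$, so the correct generating range is $\mathrm{dom}(C)=\mathrm{dom}(\kappa)\cap(-1,\infty)$, and confirm via Theorem \ref{thinfdiv} that $C=\kappa$ genuinely belongs to $\mathcal{C}$. The role of the scale $a$ also deserves a remark: for general $a$ the same computation gives $v(\mu)=a^{2}V(\mu/a)$, which is precisely the variance function of the rescaled family generated by $\kappa(a\,\cdot)$; the stated identity $v=V$ is the normalization $a=1$, equivalently absorbing $a$ into the underlying family.
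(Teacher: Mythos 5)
Your proof is correct and takes essentially the same route as the paper's: via (\ref{mixexpo}) with $a=1$ you identify the CGFs $\kappa_{\theta}$ of the natural exponential family with the FCGFs of the Poisson mixtures, so that the mixtures form a factorial tilting family whose dispersion function $\ddot{\kappa}\circ\dot{\kappa}^{-1}$ equals the variance function $V$, with overdispersion following from the positivity of $V$ (equivalently, the convexity of $\kappa$). Your additional remarks on matching the tilting-parameter range to $\mathrm{dom}(\kappa)\cap(-1,\infty)$ and on the role of the scale $a$ are refinements that the paper's own proof leaves implicit.
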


\begin{proof}
Consider the natural exponential family of CGFs $\kappa _{\theta }$
generated from the CGF $\kappa \in \mathcal{K}$. In view of (\ref{mixexpo}),
this family of CGFs is identical to the family of FCGFs for the
corresponding Poisson mixtures, which hence form a factorial tilting family,
and which is overdispersed due to the convexity of $\kappa $. The dispersion
function of this family is identical to the variance function $\ddot{\kappa}%
\circ \dot{\kappa}^{-1}\ $of the natural exponential family.
\end{proof}

\begin{example}[Binomial and negative binomial distributions]
For each value of the convolution parameter $\lambda >0$, the negative
binomial FCGFs form a factorial tilting family,%
\begin{equation}
C_{\theta }(t)=-\lambda \log \left( 1-\mu t\right)  \label{nb}
\end{equation}%
where $\mu =1/\left( 1-\theta \right) $. The mean is $m=\lambda \mu $, and
the dispersion function is $v(m)=\lambda ^{-1}m^{2}$ for $m>0$. Similarly,
for each integer $n$, the binomial FCGFs also form a factorial tilting
family, 
\begin{equation}
C_{\theta }(t)=n\log (1+\mu t)  \label{bi}
\end{equation}%
where $\mu =1/\left( 1+\theta \right) $. The mean is $m=\lambda \mu $, and
the dispersion function is $v(m)=-n^{-1}m^{2}$ for $0<m<n$.
\end{example}

\begin{proposition}
\label{binb}The binomial, negative binomial and Poisson families are the
only factorial tilting families that are closed under binomial thinning.
\end{proposition}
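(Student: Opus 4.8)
The plan is to reduce the statement to a functional equation for the dispersion function and then solve it. Consider a factorial tilting family generated by some $C\in\mathcal{C}$, with dispersion function $v$ defined by (\ref{defv}), and suppose it is closed under binomial thinning. Made precise, closure means that for every member $C_{\theta}$, having mean $m=\dot{C}(\theta)$ and dispersion $v(m)=\ddot{C}(\theta)$, and for every $c\in(0,1)$, the thinned FCGF $t\mapsto C_{\theta}(ct)$ is again a member of the family, say $C_{\theta'}$ for some $\theta'$ depending on $\theta$ and $c$. The first thing I would do is record the effect of thinning on the first two factorial cumulants.

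By the homogeneity of the factorial cumulants under dilation, equivalently by the transformation laws (\ref{factor}) taken with Poisson translation $\mu=0$, the thinned variable has mean $cm$ and dispersion $c^{2}v(m)$. On the other hand, since $C_{\theta'}$ belongs to the family, Theorem \ref{vtoc} forces its dispersion to equal the value of the family's dispersion function at its own mean, namely $v(cm)$. Equating the two expressions yields the functional equation $v(cm)=c^{2}v(m)$, which must hold for all $c\in(0,1)$ and all means $m$ in the range $\Psi$; note that closure guarantees $cm\in\Psi$, since thinning only shrinks the mean.

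Next I would solve this equation. Fixing a reference mean $m_{0}\in\Psi$ and writing an arbitrary $m\in\Psi$ either as $m=cm_{0}$ or as $m_{0}=cm$, the relation $v(cm)=c^{2}v(m)$ gives $v(m)=km^{2}$ throughout $\Psi$, with $k=v(m_{0})/m_{0}^{2}$; in particular $v$ has constant sign, which promotes the a priori only local over-/underdispersion to a global property. The three cases $k>0$, $k=0$, $k<0$ are then matched, using the characterization in Theorem \ref{vtoc} together with the Binomial and negative binomial example and the Poisson convention, to the negative binomial family with $v(m)=\lambda^{-1}m^{2}$ (where $\lambda=1/k$), the Poisson family with $v\equiv 0$, and the binomial family with $v(m)=-n^{-1}m^{2}$ (where $n=-1/k$). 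For the converse a direct substitution suffices: thinning $-\lambda\log(1-\mu t)$, $\mu t$, and $n\log(1+\mu t)$ by $c$ merely replaces $\mu$ by $c\mu$, so each of the three families is indeed closed.

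The main obstacle I anticipate is the domain and range bookkeeping required to make ``closed under binomial thinning'' precise and, above all, to ensure the functional equation holds across the \emph{entire} mean range rather than only locally; this is exactly what upgrades the constant sign of $v$ from local to global and licenses the power form everywhere. A secondary subtlety is realizability: the functional equation admits every $k<0$, but only $k=-1/n$ with $n$ a positive integer produces a genuine probability distribution, since for other negative $k$ the candidate PGF $(1-\mu+\mu s)^{-1/k}$ has negative coefficients, so the underdispersed solutions are precisely the binomial families, whereas every $k>0$ is realized by a (possibly non-integer $\lambda$) negative binomial.
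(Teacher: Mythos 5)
Your proposal is correct and follows essentially the same route as the paper: closure under thinning gives the functional equation $v(cm)=c^{2}v(m)$ via the transformation laws (\ref{factor}), whose solutions are $v(m)=km^{2}$ (including $k=0$), and Theorem \ref{vtoc} then identifies the family as negative binomial, Poisson, or binomial. Your added care about the converse and about realizability (only $k=-1/n$ with integer $n$ yields a genuine probability distribution in the underdispersed case) fleshes out details the paper leaves implicit, but the core argument is identical.
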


\begin{proof}
Let $\mathrm{FT}(\mu )$ denote the factorial tilting family with dispersion
function $v(\mu )$. If the family is closed under binomial thinning, then $%
c\cdot \mathrm{FT}(\mu )=\mathrm{FT}(c\mu )$, and hence $c^{2}v(\mu )=v(c\mu
)$. Taking $m=c\mu $, this implies that $v(m)$ is either zero or
proportional to $m^{2}.$ By Theorem \ref{vtoc} this, in turn, implies that $%
\mathrm{FT}(\mu )$ is either one of the binomial, negative binomial or
Poisson families.
\end{proof}

In view of (\ref{diller}), we conclude that a factorial tilting family that
is at the same time a natural exponential family must be closed under
binomial thinning, and is hence either binomial or negative binomial. The
Poisson natural exponential family is not included here, because each
Poisson distribution $\mathrm{Po}(\mu )$ is, on its own, a factorial tilting
family.

\subsection{Factorial dispersion models}

We now introduce factorial dispersion models as two-parameter families of
FCGFs obtained by combining the operations of factorial tilting and
convolution/division. For given $C\in \mathcal{C}$ and $\lambda >0$ we
consider the following type of FCGF \emph{(additive case):}%
\begin{equation}
t\longmapsto \lambda C_{\theta }(t)=\lambda C(\theta +t)-\lambda C(\theta )
\label{svd1}
\end{equation}%
for $\theta \in \Theta _{0}$. The second expression of (\ref{svd1}) shows
that the domain for $(\theta ,\lambda )$ is a product set $\Theta _{0}\times
\Lambda $, with $\Lambda =\mathbb{R}_{+}$ if $C$ is infinitely divisible. If 
$C$ is not infinitely divisible, the domain $\Lambda $ is a subset of $%
\mathbb{R}_{+}$ containing $\mathbb{N}$.

Like for exponential dispersion models, it is useful to consider as well the 
\emph{reproductive case}, obtained by the dilation $\gamma =1/\lambda $,
which yields the FCGF 
\begin{equation}
t\longmapsto \gamma ^{-1}C_{\theta }(\gamma t)\text{,}  \label{svd2}
\end{equation}%
where the domain for the dispersion parameter $\gamma $ is restricted to
those values for which the dilation exists. We may parametrize a factorial
tilting family locally (but not necessarily globally) by the mean $\mu =\dot{%
C}(\theta )$ of (\ref{svd2}), in which case we denote the distributions
corresponding to (\ref{svd1}) and (\ref{svd2}) by $\mathrm{FD}^{\ast }(\mu
,\lambda )$ and $\mathrm{FD}(\mu ,\gamma )$, respectively. The dilation $%
\mathrm{FD}(\mu ,\gamma )=\gamma \cdot \mathrm{FD}^{\ast }(\mu ,\gamma
^{-1}) $ linking the two cases is called the \emph{duality transformation}.
The following table summarizes the two types of factorial dispersion models.%
\begin{equation*}
\begin{tabular}{|l|c|c|c|c|}
\hline
Type & Symbol & FCGF & Mean & Dispersion \\ \hline
Additive & $\mathrm{FD}^{\ast }(\mu ,\lambda )$ & $\lambda C_{\theta }(t)$ & 
$\lambda \mu $ & $\lambda v(\mu )$ \\ \hline
Reproductive & $\mathrm{FD}(\mu ,\gamma )$ & $\gamma ^{-1}C_{\theta }(\gamma
t)$ & $\mu $ & $\gamma v(\mu )$ \\ \hline
\end{tabular}%
\end{equation*}%
We note in passing that the zero-inflation index $\mathrm{ZI}\left( X\right)
=1+C_{\theta }(-1;X)/\dot{C}_{\theta }(0;X)$ does not depend on the value of 
$\lambda $ in the additive case, but only on $\mu $.

For a factorial dispersion model generated by $C$, we refer to $C$ and $v$
as the unit FCGF and unit dispersion function, respectively. The additive
form $\mathrm{FD}(\mu ,\gamma )$ is often useful because of its simple
dispersion function $\gamma v(\mu )$, whereas $\mathrm{FD}^{\ast }(\mu
,\lambda ),$ with mean $m=\lambda \mu ,$ say, has dispersion function $%
m\longmapsto \lambda v(m/\lambda ).$ An additive factorial tilting family $%
\mathrm{FD}^{\ast }(\mu ,\lambda )$ is closed under convolution, 
\begin{equation*}
\mathrm{FD}^{\ast }(\mu ,\lambda _{1})+\mathrm{FD}^{\ast }(\mu ,\lambda
_{2})=\mathrm{FD}^{\ast }(\mu ,\lambda _{1}+\lambda _{2})\text{.}
\end{equation*}%
\ For $Y_{1},\ldots ,Y_{n}$ i.i.d. $\mathrm{FD}(\mu ,\gamma )$ then the
dilation average $\overline{Y}_{n}=n^{-1}\cdot \left( Y_{1}+\cdots
+Y_{n}\right) $ satisfies the following reproductive property:%
\begin{equation}
\overline{Y}_{n}\sim \mathrm{FD}(\mu ,\gamma /n)\text{.}  \label{average}
\end{equation}

The additive binomial and negative binomial factorial dispersion models are
apparent from (\ref{bi}) and (\ref{nb}), respectively. The corresponding
reproductive FCGFs take the form 
\begin{equation}
t\mapsto \pm \gamma ^{-1}\log \left( 1\pm \gamma \mu t\right) ,
\label{ExgeoP}
\end{equation}%
where $\gamma =1/n$ or $\gamma =1/\lambda $, respectively, which correspond
to reparametrization of the two models in terms of the mean $\mu $ and the
dispersion parameter $\gamma $.

\subsection{Poisson-Tweedie mixtures and power dispersion functions\label%
{sec3.6}}

We have already introduced Poisson mixtures in Section \ref{sec:Mixtures},
and we now consider the class of Poisson-Tweedie mixtures (Hougaard \emph{et
al.}, 1997; El-Shaarawi \emph{et al.}, 2011\nocite{El-Shaarawi2011}), which
are in many ways analogous to ordinary Tweedie models, and includes several
well-known distributions as special cases.

Consider the Tweedie exponential dispersion model $\mathrm{Tw}_{p}(\mu
,\gamma )$, which has mean $\mu \in \Omega _{p}$, dispersion parameter $%
\gamma >0$, and unit variance function%
\begin{equation}
V(\mu )=\mu ^{p}\qquad \text{for }\quad \mu \in \Omega _{p}\text{,}
\label{disp}
\end{equation}%
where $p\notin \left( 0,1\right) $, $\Omega _{0}=\mathbb{R},$ and $\Omega
_{p}=\mathbb{R}_{+}$ for $p\neq 0.$ The Poisson-Tweedie mixture $\mathrm{PT}%
_{p}(\mu ,\gamma )$ is defined as the Poisson mixture $\mathrm{PT}_{p}(\mu
,\gamma )=P\left( \mathrm{Tw}_{p}(\mu ,\gamma );1\right) $. Here we require
that $p\geq 1$, in order to make $\mathrm{Tw}_{p}(\mu ,\gamma )$
non-negative. For each $p\geq 1$, the Poisson-Tweedie mixture $Y\sim \mathrm{%
PT}_{p}(\mu ,\gamma )$ is an overdispersed factorial dispersion model with
mean $\mu $, unit dispersion function $v(\mu )=\mu ^{p}$ defined by (\ref%
{disp}), and variance 
\begin{equation}
\mathrm{Var}\left( Y\right) =\mu +\gamma \mu ^{p}\text{.}  \label{Poispow}
\end{equation}%
The Poisson-Tweedie mixture $\mathrm{PT}_{p}(\mu ,\gamma )$ satisfies the
following dilation property:%
\begin{equation}
c\cdot \mathrm{PT}_{p}(\mu ,\gamma )=\mathrm{PT}_{p}(c\mu ,c^{2-p}\gamma
)\qquad \text{for }\quad c>0\text{.}  \label{dilation}
\end{equation}%
In the following, we use the notation $\mathrm{PT}_{p}(\mu ,\gamma )$ for
any factorial dispersion model with power dispersion function, even if it is
not a Poisson-Tweedie mixture.

The next theorem presents a characterization of factorial dispersion models
that satisfy a dilation property like (\ref{dilation}); similar to the
characterization theorem for Tweedie exponential dispersion models \cite[%
p.~128]{Jorgensen1997}. Table \ref{tablep} summarizes the main types of
factorial dispersion models with power dispersion functions, including the
Hermite and the Poisson-binomial distributions, which are not
Poisson-Tweedie mixtures. Other values of $p$ than those found in Table \ref%
{tablep} are possible, as shown in Example \ref{COM-Poisson} below.

\begin{theorem}
\label{characterize}Let $\mathrm{FD}(\mu ,\gamma )$ be a non-degenerate
locally overdispersed factorial dispersion model satisfying $\inf \Psi
_{0}\leq 0$ or $\sup \Psi _{0}=\infty $, such that for some $\gamma >0$ and
an interval of $c$-values%
\begin{equation}
c^{-1}\cdot \mathrm{FD}(c\mu ,\varphi _{c}\gamma )=\mathrm{FD}(\mu ,\gamma )%
\text{ for }\mu \in \Psi _{0}\text{,}  \label{fixedpoint}
\end{equation}%
where $\varphi _{c}$ is a positive function of $c$. Then $\mathrm{FD}(\mu
,\gamma )$ has power dispersion function proportional to $\mu ^{p}$ for some 
$p\in \mathbb{R}$, and $\varphi _{c}=c^{2-p}$.
\end{theorem}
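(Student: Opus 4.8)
The plan is to reduce the distributional identity (\ref{fixedpoint}) to a functional equation for the dispersion function and then solve that equation. Since (\ref{fixedpoint}) is an equality of distributions, it forces the first two factorial cumulants to agree on both sides, and I would extract exactly this information. By the transformation laws (\ref{factor}), dilation by $c^{-1}$ multiplies the mean by $c^{-1}$ and the dispersion by $c^{-2}$. Applied to $\mathrm{FD}(c\mu,\varphi_{c}\gamma)$, which has mean $c\mu$ and dispersion $\varphi_{c}\gamma\,v(c\mu)$, the left-hand side of (\ref{fixedpoint}) therefore has mean $\mu$ and dispersion $c^{-2}\varphi_{c}\gamma\,v(c\mu)$. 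The means thus agree automatically with those of $\mathrm{FD}(\mu,\gamma)$, so all the content sits in the dispersions, giving
\begin{equation*}
\varphi_{c}\,v(c\mu)=c^{2}v(\mu),
\end{equation*}
valid for $\mu\in\Psi_{0}$ and $c$ in the given interval.

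Next I would eliminate $\varphi_{c}$ and solve for $v$. Fix a reference mean $\mu_{1}\in\Psi_{0}$; since the family is locally overdispersed we have $v(\mu_{1})>0$, and setting $\mu=\mu_{1}$ yields $\varphi_{c}=c^{2}v(\mu_{1})/v(c\mu_{1})$. Substituting this back cancels the factor $c^{2}$ and leaves the homogeneity relation
\begin{equation*}
v(c\mu)=h(c)\,v(\mu),\qquad h(c)=v(c\mu_{1})/v(\mu_{1}).
\end{equation*}
Applying this relation twice gives $v(c_{1}c_{2}\mu)=h(c_{1})h(c_{2})v(\mu)=h(c_{1}c_{2})v(\mu)$, so $h$ is multiplicative. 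As $v$ is real analytic and strictly positive on $\Psi_{0}$, $h$ is continuous, and a continuous multiplicative function is a power, $h(c)=c^{p}$ for some $p\in\mathbb{R}$. Writing $\mu=c\mu_{1}$ then gives $v(\mu)=v(\mu_{1})\mu_{1}^{-p}\mu^{p}$, so $v$ is proportional to $\mu^{p}$. Feeding $v(\mu)\propto\mu^{p}$ back into the dispersion identity gives $\varphi_{c}\,(c\mu)^{p}=c^{2}\mu^{p}$, hence $\varphi_{c}=c^{2-p}$, as claimed; and once $v\propto\mu^{p}$ is known, Theorem~\ref{vtoc} identifies the family.

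The main obstacle is the domain bookkeeping, and this is where the hypothesis $\inf\Psi_{0}\le0$ or $\sup\Psi_{0}=\infty$ enters. For the manipulations above I need $c\mu$, and the products $c_{1}c_{2}\mu$, to remain in the mean domain, i.e. $\Psi_{0}$ must be scalable by the available range of $c$. When $\sup\Psi_{0}=\infty$ the set contains a ray $(a,\infty)$ that is stable under dilation by $c\ge1$, and when $\inf\Psi_{0}\le0$ it contains an interval $(0,b)$ stable under dilation by $c\le1$; in either case the functional equation holds on a full multiplicative half-line, which is enough to run the multiplicative argument and to conclude $h(c)=c^{p}$ there. The power form obtained on this half-line then extends across all of $\Psi_{0}$ by real analyticity of $v$. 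I would also remark that $c=1$ forces $\varphi_{1}=1$ trivially, so there is no loss in taking the interval of $c$-values to contain $1$, which guarantees the products needed for the multiplicativity of $h$ stay in range.
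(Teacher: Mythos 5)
Your proof is correct and follows essentially the same route as the paper's: equate dispersions on both sides of (\ref{fixedpoint}) to get $\varphi_{c}\,v(c\mu)=c^{2}v(\mu)$, eliminate $\varphi_{c}$ at a reference mean (the paper takes $\mu=1$ without loss of generality, you take a general $\mu_{1}\in\Psi_{0}$), reduce to the multiplicative Cauchy functional equation solved by continuity, and conclude $v\propto\mu^{p}$ and $\varphi_{c}=c^{2-p}$. Your added care about the reference point and the domain bookkeeping (the role of $\inf\Psi_{0}\le0$ or $\sup\Psi_{0}=\infty$, and analytic continuation of the power form across $\Psi_{0}$) only makes explicit what the paper's proof leaves implicit.
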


\begin{proof}
Calculating the dispersion on each side of (\ref{fixedpoint}) gives for an
interval of $c$-values 
\begin{equation}
c^{-2}\varphi _{c}\gamma v(c\mu )=\gamma V(\mu )\text{ for }\mu \in \Psi _{0}%
\text{,}  \label{functional}
\end{equation}%
where $v$ is the local unit dispersion function of $\mathrm{FD}(\mu ,\gamma )
$. Taking, without loss of generality, $\mu =1$ in (\ref{functional}) gives $%
\varphi _{c}=c^{2}V(1)/V(c)$, which together with (\ref{functional}) implies
that $v$ satisfies the functional equation $v(1)v(c\mu )=v(c)v(\mu )$. 
This equation is equivalent to Cauchy's functional equation. By
the continuity of $v$, the solutions to this equation are of the form $v(\mu
)=\lambda \mu ^{p}$ for some $p\in \mathbb{R}$ and $\lambda \neq 0$ because
the family is non-degenerate (i.e. non-Poisson). This, in turn, implies that 
$\varphi _{c}=c^{2-p}$. In view of Theorem \ref{vtoc}, $\mathrm{FD}(\mu
,\gamma )$ is hence a Poisson-Tweedie model in the case $p\geq 1$. For
values of $p$ less than $1$, the model $\mathrm{FD}(\mu ,\gamma )$, if it
exists, is not a Poisson-Tweedie mixture. 
\end{proof}

%TCIMACRO{\TeXButton{B}{\begin{table}[tbp] \centering}}%
%BeginExpansion
\begin{table}[tbp] \centering%
%EndExpansion
%TCIMACRO{%
%\TeXButton{Power v-functions}{\caption{The main types of factorial dispersion models with power dispersion functions.}}}%
%BeginExpansion
\caption{The main types of factorial dispersion models with power dispersion functions.}%
%EndExpansion
\begin{tabular}{|l|c|c|}
\hline
\textbf{Type} & $p$ & $\alpha $ \\ \hline
Hermite & $p=0$ & $\alpha =2$ \\ 
Poisson-binomial & $p=(n-2)/(n-1)$ & $n=3,4,\ldots $ \\ 
Neyman Type A & $p=1$ & $\alpha =-\infty $ \\ 
Poisson-negative binomial & $1<p<1$ & $\alpha <0$ \\ 
Pólya-Aeppli & $p=3/2$ & $\alpha =-1$ \\ 
Negative binomial/binomial & $p=2$ & $\alpha =0$ \\ 
Factorial discrete stable & $p>2$ & $0<\alpha <1$ \\ 
Poisson-inverse Gaussian & $p=3$ & $\alpha =1/2$ \\ \hline
\end{tabular}%
\label{tablep}%
%TCIMACRO{\TeXButton{E}{\end{table}}}%
%BeginExpansion
\end{table}%
%EndExpansion

\subsection{Discrete stable factorial dispersion models\label{sec4.4}}

The case $p>2$ in Table \ref{tablep} correspond to factorial dispersion
models generated by discrete $\alpha $-stable distributions with $\alpha \in
(0,1)$, where $\alpha $ is defined from the power parameter $p$ by

\begin{equation}
\alpha =1+(1-p)^{-1}\text{,}  \label{alfap}
\end{equation}%
with the convention that $\alpha =-\infty $ for $p=1$ 
%TCIMACRO{\TeXButton{citep Jorgensen1997}{\citep[p.~131]{Jorgensen1997}}}%
%BeginExpansion
\citep[p.~131]{Jorgensen1997}%
%EndExpansion
. In particular, the case $p=3$ corresponds to the "Sichel" or
Poisson-inverse Gaussian distributions with unit dispersion function $v(\mu
)=\mu ^{3}$; see \cite{Willmot1987}.

The discrete $\alpha $-stable distribution with $\alpha \in \left(
0,1\right) $ was introduced by Steutel \& van Harn, 1979)\nocite{Steutel1979}%
, and corresponds to FCGFs proportional to 
\begin{equation*}
C^{(\alpha )}(t)=\frac{\alpha -1}{\alpha }\left( \frac{t}{\alpha -1}\right)
^{\alpha }\text{ for }t/\left( \alpha -1\right) >0\text{.}
\end{equation*}%
Factorial tilting and infinite division/convolution with power $\lambda >0$
yield the Poisson-Tweedie mixture $\mathrm{PT}_{p}^{\ast }(\mu ,\lambda )$
(additive version) with FCGF 
\begin{equation}
\lambda C_{\theta }^{(\alpha )}(t)=\lambda C^{(\alpha )}(\theta )\left[
\left( 1+t/\theta \right) ^{\alpha }-1\right] ,  \label{Tweedie}
\end{equation}%
where the parameter $\mu $ is defined by 
\begin{equation}
\mu =\dot{C}^{(\alpha )}(\theta )=\left( \frac{\theta }{\alpha -1}\right)
^{\alpha -1}\text{.}  \label{bjrm:muthe}
\end{equation}%
An application of the duality transform then yields the Poisson-Tweedie
mixture $\mathrm{PT}_{p}(\mu ,\gamma )$ with $\gamma =1/\lambda $.

This construction of the Poisson-Tweedie mixtures is analogous to the
construction of the Tweedie model $\mathrm{Tw}_{p}(\mu ,\gamma )$ as an
exponential tilting of a positive $\alpha $-stable distribution in the case $%
\alpha \in (0,1)$. It is important to emphasize, however, that the above
results could not have been easily obtained by means of exponential tilting.
To illustrate this point, we note that the Poisson-inverse Gaussian mixture,
when considered as an exponential dispersion model, has unit variance
function given by 
\begin{equation*}
V(\mu )=\mu +\frac{\mu ^{3}}{2}+\frac{\mu ^{2}}{2}\sqrt{2+\mu ^{2}}\text{
for }\mu >0\text{,}
\end{equation*}%
as compared with the variance $\mu +\gamma \mu ^{3}$ obtained from (\ref%
{Poispow}). For general $p\geq 1$, the Poisson-Tweedie exponential
dispersion models have unit variance functions of the form%
\begin{equation*}
V(\mu )=\mu +\mu ^{p}\exp \left[ (2-p)H(\mu )\right]
\end{equation*}%
where $H(\mu )$ is implicitly defined (Kokonendji \emph{et al.}, 2004;\nocite%
{Kokonendji2004} Jørgensen, 1997),\nocite{Jorgensen1997} in sharp contrast
to (\ref{Poispow}). We also note that the so-called Hinde-Demétrio\emph{\ }%
class of exponential dispersion models have unit variance functions of the
form 
\begin{equation*}
V(\mu )=\mu +\mu ^{p}\text{,}
\end{equation*}%
but are not in general integer-valued (Kokonendji \emph{et al.}, 2004).%
\nocite{Kokonendji2004}

\subsection{Poisson-binomial and Poisson-negative binomial distributions 
\label{sec4.9}}

Consider the Poisson-negative binomial FCGF 
%TCIMACRO{\TeXButton{citep: Johnson2005}{\citep[p. 414]{Johnson2005}}}%
%BeginExpansion
\citep[p. 414]{Johnson2005}%
%EndExpansion
, defined by 
\begin{equation}
C(t)=\lambda \left[ \left( 1-\mu t\right) ^{-k}-1\right] ,  \label{Poly}
\end{equation}%
where $\lambda >0$, which is essentially of the form (\ref{Tweedie}) with $%
\alpha =-k<0,$ corresponding to $1<p<2$. The case $\lambda =-1$ ($p=3/2$) is
the Pólya-Aeppli distribution, and (\ref{Poly}) is also known as a
generalized Pólya-Aeppli distribution.

Similarly, let us consider the Poisson-binomial FCGF (Johnson \emph{et al.},
2005, p. 401),\nocite{Johnson2005} defined by 
\begin{equation*}
C(t)=\lambda \left[ \left( 1+\mu t\right) ^{n}-1\right] \text{,}
\end{equation*}%
where $\lambda >0$ and $n\in \mathbb{N}$. Up to a reparametrization, this
FCGF is of the form (\ref{Tweedie}) with $\alpha =n$, corresponding to $%
p=(n-2)/(n-1)\in (0,1)$ for $n\geq 2$, which are not Poisson-Tweedie
mixtures. The case $n=1$ gives the Poisson distribution, whereas $n=2$ gives
the Hermite distribution. The Poisson-binomial distribution satisfies a
binomial thinning property like (\ref{dilation}) for $c\in (0,1)$.

It is not immediately clear if there exist factorial dispersion models with $%
p\in (0,1)$ corresponding to non-integer values of $\alpha >2$. The
following, formal considerations suggest that the answer to this question
may be affirmative. To this end, consider the M-transformation of the model $%
\mathrm{PT}_{p}^{\ast }(\mu ,\lambda )$ with $a=-1/\theta $, which has FCGF%
\begin{eqnarray*}
C_{\theta }^{(\alpha )}\left( \frac{t}{1+at}\right) &=&C^{(\alpha )}(\theta )%
\left[ \left( 1+\frac{t/\theta }{1-t/\theta }\right) ^{\alpha }-1\right] \\
&=&C^{(\alpha )}(\theta )\left[ \left( 1-t/\theta \right) ^{-\alpha }-1%
\right] \text{.}
\end{eqnarray*}%
This FCGF is proportional to $C_{-\theta }^{(-\alpha )}(t),$ provided that
the following ratio is positive:

\begin{equation*}
\frac{C^{(\alpha )}(\theta )}{C^{(-\alpha )}(-\theta )}\propto \left( \alpha
-1\right) /\left( \alpha +1\right) \text{,}
\end{equation*}%
which is the case for $\left\vert \alpha \right\vert >1$. In particular, the
set $1<p<4/3$ ($\alpha <-2$) is mapped onto the set $0<p<1$ ($\alpha >2$).
Similarly, the set $4/3<p<3/2$ ($-2<\alpha <-1$) is mapped onto the set $p<0$
($1<\alpha <2$). The existence of the corresponding factorial dispersion
models will be shown in Example \ref{COM-Poisson} below.

\subsection{Neyman Type A distribution\label{Pss}}

The Neyman Type A distribution $\mathrm{PT}_{1}(\mu ,\gamma )$ is a Poisson
mixture of Poisson distributions, corresponding to the FCGF 
\begin{equation*}
C(t)=\gamma ^{-1}\mu \left( e^{\gamma t}-1\right) \text{,}
\end{equation*}%
see for example Dobbie \& Welsh (2001)\nocite{Dobbie2001} and Massé \&
Theodorescu (2005).\nocite{Masse2005} The variance of $Y\sim \mathrm{PT}%
_{1}(\mu ,\gamma )$ is 
\begin{equation*}
\mathrm{Var}\left( Y\right) =\mu \left( 1+\gamma \right) \text{,}
\end{equation*}%
which is special by not being asymptotic to $\mu $ near zero as is the case
for Poisson-Tweedie mixtures with $p>1$.

Like all reproductive Poisson-Tweedie mixtures, the parameter vector $(\mu
,\gamma )$ is identifiable from the distribution $\mathrm{PT}_{1}(\mu
,\gamma )$, by means of the first two factorial cumulants $\mu $ and $\gamma
v(\mu )$, similar to the case of reproductive exponential dispersion models.
This is not, however, the case for the parameter $(\mu ,\lambda )$ of the
additive factorial dispersion model $\mathrm{PT}_{1}^{\ast }(\mu ,\lambda )$
with FCGF 
\begin{equation*}
C(t)=\lambda \mu \left( e^{t}-1\right) ,
\end{equation*}%
where only the mean $\lambda \mu $ is identifiable. The following result
shows that this is essentially the only additive factorial dispersion model
with this defect.

\begin{theorem}
\label{Linear}Consider a locally overdispersed or underdispersed additive
factorial dispersion model $\mathrm{FD}^{\ast }(\mu ,\lambda )$. If the
factorial tilting families $\mathrm{FD}^{\ast }(\cdot ,\lambda )$ are
identical for an interval of $\lambda $-values, then $\mathrm{FD}^{\ast
}(\mu ,\lambda )$ is a Neyman Type A family.
\end{theorem}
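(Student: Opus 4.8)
The plan is to translate the stated invariance into a statement about dispersion functions via Theorem~\ref{vtoc}, and then to solve the resulting functional equation. Recall that the additive model $\mathrm{FD}^{\ast }(\mu ,\lambda )$ has mean $m=\lambda \mu $ and dispersion $\lambda v(\mu )$, where $v$ is the unit dispersion function; hence, expressed as a function of its own mean $m$, the factorial tilting family $\mathrm{FD}^{\ast }(\cdot ,\lambda )$ has dispersion function $V_{\lambda }(m)=\lambda v(m/\lambda )$, defined on the mean domain $\lambda \Psi _{0}$. Since these families are locally over- or underdispersed, Theorem~\ref{vtoc} guarantees that each is characterized by its dispersion function. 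Therefore the hypothesis that $\mathrm{FD}^{\ast }(\cdot ,\lambda )$ is the same family for an interval $I$ of $\lambda $-values is equivalent to $V_{\lambda }=V_{\lambda ^{\prime }}$ for all $\lambda ,\lambda ^{\prime }\in I$, which in particular forces the common mean domain $\lambda \Psi _{0}$ to be scale invariant, e.g.\ $\Psi _{0}=\mathbb{R}_{+}$.

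First I would fix a reference value $\lambda _{0}\in I$ and rewrite the identity as $\lambda v(m/\lambda )=\lambda _{0}v(m/\lambda _{0})$. Substituting $w=m/\lambda _{0}$ and $r=\lambda _{0}/\lambda $, this becomes the homogeneity relation $v(rw)=rv(w)$, valid for $w\in \Psi _{0}$ and $r$ ranging over an interval of positive reals. This is the key structural consequence: the unit dispersion function must be homogeneous of degree one. It is the direct analogue of the degree-$p$ homogeneity $v(c\mu )=c^{p}v(\mu )$ underlying Theorem~\ref{characterize}, now pinned to $p=1$.

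Next I would solve this equation. Because $v$ is real analytic, fixing any $w_{0}\in \Psi _{0}$ shows that $v$ is linear on the interval $\{rw_{0}:r\in I^{\prime }\}$, and analytic continuation extends linearity to all of $\Psi _{0}$; thus $v(\mu )=c\mu $ for some constant $c\neq 0$, the constant being nonzero since the family is over- or underdispersed and hence non-Poisson. This is exactly the power dispersion function with $p=1$ of Table~\ref{tablep}. Finally, I would invoke the inversion formula from the proof of Theorem~\ref{vtoc}: with $v(\mu )=c\mu $ we have $d\dot{C}^{-1}/d\mu =1/(c\mu )$, so that $\dot{C}(\theta )\propto e^{c\theta }$ and hence the unit FCGF is $C(t)\propto e^{ct}-1$, an exponential. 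Up to a dilation absorbing the constant $c$ together with the tilting parameter, this is precisely the Neyman Type~A unit FCGF $e^{t}-1$, so that $\mathrm{FD}^{\ast }(\mu ,\lambda )$ is a Neyman Type~A family.

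The main obstacle is the first step: one must argue carefully that ``identical factorial tilting families'' is genuinely equivalent to equality of the intrinsic dispersion functions $V_{\lambda }$ on a common mean domain. This is where Theorem~\ref{vtoc} does the real work and where the domain compatibility (scale invariance of $\Psi _{0}$) must be checked; once this equivalence is in hand, the homogeneity equation and its exponential inversion are routine.
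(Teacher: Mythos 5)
Your proposal is correct and takes essentially the same route as the paper: both translate ``identical families'' into the identity $\lambda v(m/\lambda )=v(m)$ of local dispersion functions via the intrinsic-property part of Theorem~\ref{vtoc} (the paper simply specializes to $m=1$ to get $v(1/\lambda )=v(1)/\lambda $ on an interval rather than deriving full degree-one homogeneity), conclude that $v(\mu )$ is proportional to $\mu $, and then identify the family as Neyman Type~A through the characterization in Theorem~\ref{vtoc}. Your explicit inversion step $\dot{C}(\theta )\propto e^{c\theta }$ merely unpacks what the paper leaves inside the citation of Theorem~\ref{vtoc}.
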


\begin{proof}
We can assume, without loss of generality, that $(1,1)\in \Psi _{0}\times
\Lambda $, the domain for $(\mu ,\lambda )$. Let $v$ denote the unit
dispersion function of $\mathrm{FD}^{\ast }(\mu ,\lambda )$. If the
factorial tilting family $\mathrm{FD}^{\ast }(\cdot ,1)$ is identical to the
family $\mathrm{FD}^{\ast }(\cdot ,\lambda )$, then the two local dispersion
functions are identical, i.e. $\lambda v(m/\lambda )=v(m)$, which for $m=1$
implies $v(1/\lambda )=v(1)/\lambda $ for an interval of $\lambda $-values.
We conclude that $v\left( \mu \right) $ is proportional to $\mu $, which in
view of Theorem \ref{vtoc} implies that $\mathrm{FD}^{\ast }(\mu ,\lambda )$
is a Neyman Type A family.
\end{proof}

The situation is hence analogous to the case of additive exponential
dispersion models, among which only the scaled Poisson family has this lack
of identifiability 
%TCIMACRO{\TeXButton{citep Jorgensen1997}{\citep[p.~74]{Jorgensen1997}}}%
%BeginExpansion
\citep[p.~74]{Jorgensen1997}%
%EndExpansion
.

\section{Power asymptotics and Poisson-Tweedie convergence\label{sec5}}

We now consider power asymptotics for dispersion functions of factorial
dispersion models, which proves convergence to distributions in the class of
Poisson-Tweedie mixtures, similar to the Tweedie convergence theorem of 
%TCIMACRO{\TeXButton{citet Jørgensen, 1994}{\citet{Jorgensen1994}}}%
%BeginExpansion
\citet{Jorgensen1994}%
%EndExpansion
, see also 
%TCIMACRO{\TeXButton{citet: Jorgensen1997}{\citet[][Ch.~4]{Jorgensen1997}}}%
%BeginExpansion
\citet[][Ch.~4]{Jorgensen1997}%
%EndExpansion
. This approach provides a unified method of proof for a range of different
convergence results for discrete distributions, many of which are new.

\subsection{Convergence of dispersion functions}

We first present a general convergence theorem for factorial tilting
families, which is used for proving the Poisson-Tweedie convergence theorem
below (Theorem \ref{converges}). The result is similar to the \cite{Mora1990}
convergence theorem for variance functions \cite[p.~54]{Jorgensen1997},
which says that convergence of a sequence of variance functions, when the
convergence is uniform on compact sets, implies weak convergence of the
corresponding sequence of natural exponential families.

\begin{theorem}
\label{Moraq}Let $\left\{ \mathrm{FT}_{n}(\mu ):n=1,2,\ldots \right\} $
denote a sequence of locally overdispersed or underdispersed factorial
tilting families having local dispersion functions $v_{n}$ with domains $%
\Psi _{n}$. Suppose that

\begin{enumerate}
\item $\bigcap\limits_{n=1}^{\infty }\Psi _{n}$ contains a non-empty
interval $\Psi _{0}$;

\item $\lim_{n\rightarrow \infty }v_{n}(\mu )=v(\mu )$ exists uniformly on
compact subsets of $\mathrm{int}\Psi _{0}$;

\item $v(\mu )\neq 0$ for all $\mu \in \mathrm{int}\Psi _{0}$ or $v(\mu )=0$
for all $\mu \in \mathrm{int}\Psi _{0}$.
\end{enumerate}

\noindent In the case $v(\mu )\neq 0$, there exists a factorial tilting
family $\mathrm{FT}(\mu )$ whose local dispersion function coincides with $v$
on $\mathrm{int}\Psi _{0}$, such that for each $\mu $ in $\mathrm{int}\Psi
_{0}$ the sequence of distributions $\mathrm{FT}_{n}(\mu )$ converges weakly
to $\mathrm{FT}(\mu )$. In the case $v(\mu )=0$, $\mathrm{FT}_{n}(\mu )$
converges weakly for each $\mu $ in $\mathrm{int}\Psi _{0}$ to the Poisson
distribution $\mathrm{Po}(\mu )$.
\end{theorem}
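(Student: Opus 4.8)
The plan is to mirror the proof of the \cite{Mora1990} convergence theorem for variance functions, replacing the variance function and CGF throughout by the dispersion function and FCGF, and invoking the continuity theorem of \cite{Jensen1997} at the very end to convert convergence of FCGFs into weak convergence of distributions. The organizing idea is that, by Theorem~\ref{vtoc}, each local dispersion function $v_n$ determines its factorial tilting family only up to a tilt, so I would first anchor all the families at a common reference mean in order to compare the recovered FCGFs directly. Concretely, I would fix $\mu_\ast\in\mathrm{int}\,\Psi_0\subseteq\bigcap_n\Psi_n$ and let $C_n\in\mathcal{C}$ be the representative of $\mathrm{FT}_n$ normalized by $C_n(0)=0$ and $\dot C_n(0)=\mu_\ast$, so that $C_n$ is the FCGF of the member of $\mathrm{FT}_n$ with mean $\mu_\ast$. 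Writing $\phi_n=\dot C_n^{-1}$ for the strictly monotone map sending mean to tilting parameter, the inversion formula derived in the proof of Theorem~\ref{vtoc} gives $\phi_n'(\mu)=1/v_n(\mu)$ together with the anchoring condition $\phi_n(\mu_\ast)=0$.

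I would treat first the case $v(\mu)\neq0$ on $\mathrm{int}\,\Psi_0$. Since $v$ then has constant sign, on each compact $K\subseteq\mathrm{int}\,\Psi_0$ the $v_n$ are eventually bounded away from $0$ and $1/v_n\to1/v$ uniformly on $K$. Integrating from $\mu_\ast$ yields $\phi_n(\mu)=\int_{\mu_\ast}^{\mu}ds/v_n(s)\to\int_{\mu_\ast}^{\mu}ds/v(s)=:\phi(\mu)$, uniformly on compacts of $\mathrm{int}\,\Psi_0$. As the $\phi_n$ are monotone with a monotone continuous limit, their inverses converge too, so $\dot C_n=\phi_n^{-1}\to\phi^{-1}=:\dot C$ uniformly on compacts of the interior $t$-domain $\phi(\mathrm{int}\,\Psi_0)$; a further integration, using $C_n(0)=0$, then gives $C_n\to C$ uniformly on compacts there, where the limit satisfies $C(0)=0$, $\dot C(0)=\mu_\ast$, and $\ddot C\circ\dot C^{-1}=v$.

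It would then remain to certify that the limit $C$ is a genuine FCGF and to upgrade convergence from the anchor $\mu_\ast$ to every admissible mean. Because the $C_n$ are FCGFs with $0\in\mathrm{int}(\mathrm{dom})$ converging to $C$ on a neighbourhood of $0$, Theorem~1 of \cite{Jensen1997} supplies a probability measure whose FCGF agrees with $C$ near $0$; hence $C\in\mathcal{C}$ and it generates a factorial tilting family $\mathrm{FT}(\mu)$ whose local dispersion function is $v$. For a general $\mu\in\mathrm{int}\,\Psi_0$, the member $\mathrm{FT}_n(\mu)$ has FCGF $C_n(\phi_n(\mu)+\cdot)-C_n(\phi_n(\mu))$; since $\phi_n(\mu)\to\phi(\mu)$ while $C_n\to C$ uniformly on compacts of the interior domain, this tilted FCGF converges to $C(\phi(\mu)+\cdot)-C(\phi(\mu))$, the FCGF of $\mathrm{FT}(\mu)$, and a second appeal to \cite{Jensen1997} gives the asserted weak convergence $\mathrm{FT}_n(\mu)\overset{D}{\rightarrow}\mathrm{FT}(\mu)$.

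Finally, I would treat the case $v\equiv0$ separately, since there the inversion through $1/v_n$ degenerates. Let $D_n$ denote the FCGF of the mean-$\mu$ member of $\mathrm{FT}_n$, so that $D_n(0)=0$, $\dot D_n(0)=\mu$, and, by the tilting identity $\ddot C_\theta(t)=\ddot C(\theta+t)$ from the proof of Theorem~\ref{vtoc}, $\ddot D_n(t)=v_n(\dot D_n(t))$. Choosing $\delta>0$ with $[\mu-\delta,\mu+\delta]\subseteq\mathrm{int}\,\Psi_0$ and using $\sup_{[\mu-\delta,\mu+\delta]}|v_n|\to0$, a short bootstrap shows that the local mean $\dot D_n(t)$ remains in this interval for $t$ in any fixed compact once $n$ is large; hence $\ddot D_n\to0$ and two integrations give $D_n(t)\to\mu t$, the Poisson FCGF~(\ref{exponential}), whence \cite{Jensen1997} yields $\mathrm{FT}_n(\mu)\overset{D}{\rightarrow}\mathrm{Po}(\mu)$. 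I expect the main obstacle to be precisely the passage from this analytic/integral convergence of the $C_n$ to weak convergence of the underlying distributions, i.e.\ confirming that the reconstructed limit is the FCGF of an honest probability measure; this is exactly what the continuity theorem of \cite{Jensen1997} delivers, provided one keeps careful track of the monotonicity of $\phi_n$ and of the domains on which the inversions and integrations remain valid.
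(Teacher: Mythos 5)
Your proposal is correct and follows essentially the same route as the paper's Appendix B: anchor at a reference mean, recover the inverse FCGF derivative by integrating $1/v_n$, pass the uniform convergence through inversion and a further integration to get $C_n\to C$, invoke the continuity theorem of \cite{Jensen1997} for weak convergence, and handle the $v\equiv 0$ case separately by showing the second derivative of the FCGF vanishes on growing $t$-intervals. The only differences are cosmetic (the paper cites Rudin's theorem on uniform convergence of derivatives and a mean-value-theorem bound where you use direct integral representations and monotone-inverse convergence, and it extends from the anchor to general $\mu$ by repeating the argument rather than by your explicit tilting identity).
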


The proof of Theorem \ref{Moraq}, which is given in Appendix B, is similar
to the proof by 
%TCIMACRO{\TeXButton{citet Mora1990}{\citet{Mora1990}}}%
%BeginExpansion
\citet{Mora1990}%
%EndExpansion
, see also 
%TCIMACRO{\TeXButton{citet Jørgensen, 1997}{\citet[p.~54]{Jorgensen1997}}}%
%BeginExpansion
\citet[p.~54]{Jorgensen1997}%
%EndExpansion
. The case of convergence to a zero dispersion function follows the same
line of proof as in \cite{Jorgensen2010} for geometric dispersion models.

We now use Theorem \ref{Moraq} to give a new proof of the Poisson law of
thin numbers (Proposition \ref{LTN}). Let us first note that a locally
overdispersed or underdispersed reproductive factorial dispersion model $%
\mathrm{FD}(\mu ,\gamma )$ has local dispersion function of the form $\gamma
v(\mu )$, which goes to zero as $\gamma \downarrow 0$. It is easy to show
that the limit exists uniformly on compact subsets of $\Psi _{0}$. By
Theorem \ref{Moraq} this implies 
\begin{equation}
\mathrm{FD}(\mu ,\gamma )\overset{D}{\rightarrow }\mathrm{Po}(\mu )\text{ as 
}\gamma \downarrow 0\text{.}  \label{georen}
\end{equation}%
This applies, in particular, to all Poisson-Tweedie mixtures and power
dispersion function models $\mathrm{PT}_{p}(\mu ,\gamma )$. The result
implies that all factorial dispersion models resemble the Poisson
distribution for small dispersion, irrespective of their origin.
Furthermore, consider the dilation average $\overline{Y}_{n}$ based on $%
Y_{1},\ldots ,Y_{n}$ i.i.d. from $\mathrm{FD}(\mu ,\gamma )$, which, by (\ref%
{average}), has distribution $\overline{Y}_{n}\sim \mathrm{FD}(\mu ,\gamma
/n)$. By (\ref{georen}) this implies that 
\begin{equation*}
\overline{Y}_{n}\overset{D}{\rightarrow }\mathrm{Po}(\mu )\text{ as }%
n\rightarrow \infty \text{.}
\end{equation*}%
We have hence obtained a new proof of the law of thin numbers. This Poisson
convergence result is analogous to the exponential convergence result for
geometric dispersion models of \cite{Jorgensen2010}.

\subsection{Power asymptotics}

To motivate the next Poisson-Tweedie convergence theorem, let us rewrite the
dilation result (\ref{dilation}) in the form of a fixed point 
\begin{equation*}
c^{-1}\cdot \mathrm{PT}_{p}(c\mu ,c^{2-p}\gamma )=\text{$\mathrm{PT}$}%
_{p}(\mu ,\gamma )\text{.}
\end{equation*}%
The next theorem shows that this fixed point has a domain of attraction
characterized by a power asymptotic dispersion function. The theorem is
analogous to the Tweedie convergence theorem for exponential dispersion
models 
%TCIMACRO{%
%\TeXButton{citep Jorgensen1997}{\citep[pp.~148--149]{Jorgensen1997}} }%
%BeginExpansion
\citep[pp.~148--149]{Jorgensen1997}
%EndExpansion
and to similar convergence results for extreme and geometric dispersion
models 
%TCIMACRO{%
%\TeXButton{citep: Jorgensen2007a,2010}{\citep{Jorgensen2007a,Jorgensen2010}}}%
%BeginExpansion
\citep{Jorgensen2007a,Jorgensen2010}%
%EndExpansion
.

\begin{theorem}
\label{converges}Let $\mathrm{FD}(\mu ,\gamma )$ denote a locally
overdispersed or underdispersed factorial dispersion model with unit
dispersion function $v$ on $\Psi _{0}$, such that either $\inf \Psi _{0}\leq
0$ or $\sup \Psi _{0}=\infty $. Assume that for some $p\in \mathbb{R}$ the
unit dispersion function satisfies $v(\mu )\sim c_{0}\mu ^{p}$ as either $%
\mu \downarrow 0$ or $\mu \rightarrow \infty $. Then for each $\mu \in
\Omega _{p}$ 
\begin{equation}
c^{-1}\cdot \mathrm{FD}(c\mu ,c^{2-p}\gamma )\overset{D}{\longrightarrow }%
\mathrm{PT}_{p}(\mu ,\gamma c_{0})\qquad \text{ as }c\downarrow 0\text{ or }%
c\rightarrow \infty ,  \label{convergence}
\end{equation}%
respectively. In the case $c\downarrow 0$, the model $\mathrm{FD}(\mu
,\gamma )$ is required to be infinitely dilatable, and if $%
c^{2-p}\rightarrow \infty $ the model is required to be infinitely divisible.
\end{theorem}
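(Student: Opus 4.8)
The plan is to deduce the result from the dispersion-function convergence theorem, Theorem~\ref{Moraq}, applied along arbitrary sequences $c_n\downarrow 0$ (resp.\ $c_n\to\infty$). First I would fix $\gamma>0$ and, for each admissible $c$, view the left-hand side as the factorial tilting family $\mu\mapsto c^{-1}\cdot\mathrm{FD}(c\mu,c^{2-p}\gamma)$. That this is again a factorial tilting family follows because dilation by the fixed factor $c^{-1}$ maps the family generated by $C$ onto the one generated by $D=C(c^{-1}\,\cdot\,)$; concretely $C_\theta(c^{-1}t)=D_\phi(t)$ with $\phi=c\theta$. Using the homogeneity laws~(\ref{factor}) --- the mean is homogeneous of degree one and the dispersion of degree two under dilation --- the member indexed by $\mu$ has mean $\mu$, and the family has local dispersion function
\begin{equation*}
v_c(\mu)=c^{-2}\bigl(c^{2-p}\gamma\bigr)v(c\mu)=c^{-p}\gamma\,v(c\mu),
\end{equation*}
defined for $c\mu\in\Psi_0$.

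Next I would establish the uniform convergence on compacta that Theorem~\ref{Moraq} demands. Writing $g(s)=v(s)/s^{p}$, the asymptotic hypothesis reads precisely $g(s)\to c_0$ as $s\downarrow0$ (resp.\ $s\to\infty$), and substitution yields the clean identity $v_c(\mu)=\gamma\,\mu^{p}g(c\mu)$, so that $v_c(\mu)-\gamma c_0\mu^{p}=\gamma\mu^{p}\bigl(g(c\mu)-c_0\bigr)$. For a compact $K\subset\mathrm{int}\,\Omega_p$ the factor $\gamma\mu^{p}$ is bounded, while $cK$ is squeezed into a neighbourhood of $0$ as $c\downarrow0$ (resp.\ pushed towards $\infty$ as $c\to\infty$, using $\inf K>0$); continuity of $g$ and $g\to c_0$ then give $\sup_{\mu\in K}|g(c\mu)-c_0|\to0$, hence $v_c\to\gamma c_0\mu^{p}$ uniformly on $K$. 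The hypothesis $\inf\Psi_0\le0$ (resp.\ $\sup\Psi_0=\infty$) is exactly what guarantees $cK\subset\Psi_0$ for the relevant $c$, so that $v_c$ is defined there and $\bigcap_c c^{-1}\Psi_0\supseteq\mathrm{int}\,\Omega_p$.

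With these facts the three hypotheses of Theorem~\ref{Moraq} hold: the domains cover the non-empty interval $\mathrm{int}\,\Omega_p$, the convergence is uniform on compacta, and the limit $\gamma c_0\mu^{p}$ is non-zero throughout, the local over/underdispersion forcing $c_0\neq0$. Theorem~\ref{Moraq} then yields weak convergence of the family $v_{c_n}$ to the unique factorial tilting family with dispersion function $\gamma c_0\mu^{p}$, which by Theorem~\ref{vtoc} and the power-dispersion identification is $\mathrm{PT}_p(\mu,\gamma c_0)$. Since this holds along every sequence, the continuous limit~(\ref{convergence}) follows.

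The main obstacle is not this analysis but securing the existence of the left-hand side for all small (resp.\ large) $c$. The dilation factor $c^{-1}$ tends to $\infty$ as $c\downarrow0$, and since dilation by factors exceeding $1$ is not automatic this is precisely where infinite dilatability of $\mathrm{FD}(\mu,\gamma)$ enters. Similarly the reproductive dispersion parameter $c^{2-p}\gamma$ corresponds to the additive convolution parameter $\lambda=c^{\,p-2}/\gamma$, and when $c^{2-p}\to\infty$ one has $\lambda\downarrow0$, so arbitrarily small convolution powers are needed and infinite divisibility must be assumed. Once existence is arranged on the relevant range of $c$, the residual checks --- that the transformed families are genuinely factorial tilting families on $\mathrm{int}\,\Omega_p$ and that one lands in the non-degenerate ($v\neq0$) branch of Theorem~\ref{Moraq} rather than the Poisson branch --- are routine.
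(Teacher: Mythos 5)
Your proposal is correct and follows essentially the same route as the paper's own proof: both view the left-hand side of (\ref{convergence}) as a factorial tilting family with mean $\mu$ and local dispersion function $c^{-2}c^{2-p}\gamma\,v(c\mu)=c^{-p}\gamma\,v(c\mu)$, verify uniform convergence on compacta via the factorization $\gamma\mu^{p}\bigl(v(c\mu)/(c\mu)^{p}-c_{0}\bigr)$ (the paper's $\epsilon$-estimate with $(M_{1}^{p}+M_{2}^{p})\epsilon$ is exactly your $g$-function argument), and then invoke Theorem~\ref{Moraq}. Your added remarks on why infinite dilatability and infinite divisibility are needed for existence are consistent with, and slightly more explicit than, the paper's treatment, which leaves those conditions as hypotheses without elaboration.
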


\begin{proof} Without loss of generality we may take $c_0=1$.
We first note that for each given value of $\gamma $ and $c$, the left-hand
side of (\ref{convergence}) is a factorial tilting family with mean $\mu $,
provided that $c$ is small (large) enough for $c\mu $ to belong to $\Psi
_{0} $. The corresponding dispersion function satisfies 
\begin{equation*}
c^{-2}c^{2-p}\gamma v(c\mu )\rightarrow \gamma \mu ^{p}\text{ as }%
c\downarrow 0\text{ or }c\rightarrow \infty \text{,}
\end{equation*}%
respectively, and hence converges to the dispersion function of $\mathrm{PT}%
_{p}(\mu ,\gamma )$. To show that the convergence is uniform in $\mu $ on
compact subsets of $\Omega _{p}$, let us consider the case where $%
c\downarrow 0$ (the proof is similar in the case $c\rightarrow \infty $).
Let $0<M_{1}\leq \mu \leq M_{2}<\infty $ and $\epsilon >0$ be given, and let 
$c$ be small enough to make 
\begin{equation*}
\left\vert \frac{v(c\mu )}{\left( c\mu \right) ^{p}}-1\right\vert <\epsilon
\end{equation*}%
for all $\mu \leq M_{2}$. Then 
\begin{equation*}
\left\vert \frac{v(c\mu )}{c^{p}}-\mu ^{p}\right\vert =\mu ^{p}\left\vert 
\frac{v(c\mu )}{\left( c\mu \right) ^{p}}-1\right\vert \leq
(M_{1}^{p}+M_{2}^{p})\epsilon \text{,}
\end{equation*}%
which shows that the convergence is uniform on the compact interval $%
M_{1}\leq \mu \leq M_{2}$. The result (\ref{convergence}) now follows from
Theorem \ref{Moraq}.
\end{proof}

Many factorial dispersion models have power asymptotic dispersion functions,
and are hence asymptotically similar to Poisson-Tweedie mixtures. Thus,
under the hypothesis of Theorem \ref{converges}, the dilation property (\ref%
{dilation}) for $\mathrm{PT}_{p}(\mu ,\gamma )$ implies the following
distribution approximation: 
\begin{equation}
\mathtt{\mathrm{FD}}(c\mu ,c^{2-p}\gamma )\overset{\cdot }{\sim }\mathrm{P}%
\text{$\mathrm{T}$}_{p}(c\mu ,c^{2-p}\gamma c_{0})  \label{approx}
\end{equation}%
for $c$ small or $c$ large, respectively. In view of the fact that any FCGF
belongs to some factorial dispersion model (namely the model generated by
the FCGF itself), many factorial dispersion models may be approximated by
Poisson-Tweedie models in this way.

\begin{example}[Discrete Linnik distribution]
The discrete Linnik distribution is defined by the FCGF%
\begin{equation}
C(t)=-b\log \left[ 1+c\left( -t\right) ^{\alpha }\right]  \label{Linnik}
\end{equation}%
where $0<\alpha <1$ and $b,c>0$ 
%TCIMACRO{\TeXButton{citep: Johnson2005}{\citep[p. 497]{Johnson2005}}}%
%BeginExpansion
\citep[p. 497]{Johnson2005}%
%EndExpansion
. This distribution is hence infinitely dilatable as well as infinitely
divisible. The asymptotic behaviour of $C(t)$ as $t\uparrow 0$ is 
\begin{equation*}
C(t)\sim -\lambda c\left( -t\right) ^{\alpha }\text{,}
\end{equation*}%
which, in turn, implies that the unit dispersion function is power
asymptotic at infinity, 
\begin{equation*}
v(\mu )\sim c_{0}\mu ^{p}\text{ as }\mu \rightarrow \infty
\end{equation*}%
for some $c_{0}>0$, where $p>2$ is related to $\alpha \in (0,1)$ by (\ref%
{alfap}). It follows that the factorial dispersion model generated by $C(t)$
satisfies (\ref{convergence}) as $c\rightarrow \infty $.
\end{example}

To connect the result (\ref{convergence}) with large sample theory, let $%
\bar{Y}_{n}\sim \mathrm{FD}(\mu ,\gamma /n)$ denote the dilation average of
an i.i.d. sample from the distribution $\mathrm{FD}(\mu ,\gamma )$ (cf. Eq. (%
\ref{average})). Then for $p\neq 2$ we may rewrite (\ref{convergence}) as
follows (taking $c^{2-p}=1/n$): 
\begin{equation}
n^{-1/(p-2)}\cdot \mathtt{\mathrm{FD}}(n^{1/(p-2)}\mu ,\gamma /n)\overset{D}{%
\rightarrow }\mathrm{PT}_{p}(\mu ,\gamma c_{0})\text{ }\qquad \text{as }%
n\rightarrow \infty \text{,}  \label{larsam}
\end{equation}%
so the scaled and factorially tilted dilation average $\bar{Y}_{n}$
converges to a Poisson-Tweedie model. We interpret this result via (\ref%
{approx}) as saying that a system subject to independent $\mathrm{FD}(\mu
,\gamma )$-distributed shocks will eventually settle in what may be called a
Poisson-Tweedie equilibrium.

Alternatively, let us consider the case where $\bar{Y}_{n}\sim \mathrm{FD}%
(\mu ,\gamma )$ is the dilation average of an i.i.d. sample from the
distribution $\mathrm{FD}(\mu ,n\gamma )$, which requires that the model $%
\mathrm{FD}(\mu ,\gamma )$ be infinitely divisible. Then for $p\neq 2$ we
may rewrite (\ref{convergence}) as follows (taking $c^{2-p}=n$): 
\begin{equation}
n^{-1/(2-p)}\cdot \mathtt{\mathrm{FD}}(n^{1/(2-p)}\mu ,\gamma n)\overset{D}{%
\rightarrow }\mathrm{PT}_{p}(\mu ,\gamma c_{0})\text{ }\qquad \text{as }%
n\rightarrow \infty \text{.}  \label{infdiv}
\end{equation}%
We interpret the result (\ref{infdiv}) as saying that the scaled and
factorially tilted component $\mathrm{FD}(\mu ,n\gamma )$ converges to a
Poisson-Tweedie model. The main feature of (\ref{infdiv}) is that the signs
of the powers of $n$ are reversed compared with (\ref{larsam}).

The results (\ref{larsam}) and (\ref{infdiv}) both require $p\neq 2$, which
together with Proposition \ref{binb} highlights the special role of the
binomial and negative binomial distributions in Poisson-Tweedie asymptotics,
as we shall now see.

\subsection{Binomial and negative binomial convergence\label{sec-gamma}}

We now discuss the power asymptotics of Theorem \ref{converges} in the case $%
p=2$ ($\alpha =0$). We first note that the dilation property (\ref%
{fixedpoint}) for the negative binomial distribution $\mathrm{PT}_{2}(\mu
,\gamma )$ takes the form 
\begin{equation*}
c^{-1}\cdot \mathrm{PT}_{2}(c\mu ,\gamma )=\mathrm{PT}_{2}(\mu ,\gamma )%
\text{ for }c>0\text{,}
\end{equation*}%
for all $\mu >0$ and $\gamma >0$. Suppose that the locally overdispersed
factorial dispersion model $\mathrm{FD}(\mu ,\gamma )$ with mean domain $%
\Psi _{0}$ is such that either $\inf \Psi _{0}\leq 0$ or $\sup \Psi
_{0}=\infty $, and assume that the unit dispersion function satisfies $v(\mu
)\sim \mu ^{2}$ as $\mu \downarrow 0$ or $\mu \rightarrow \infty $,
respectively. The corresponding version of (\ref{convergence}) is then 
\begin{equation}
c^{-1}\cdot \mathrm{FD}(c\mu ,\gamma )\overset{D}{\rightarrow }\mathrm{PT}%
_{2}(\mu ,\gamma )\text{ as }c\downarrow 0\text{ or }c\rightarrow \infty 
\text{,}  \label{gaco}
\end{equation}%
respectively, for all $\mu ,\gamma >0$. The result (\ref{gaco}) implies the
following negative binomial approximation: 
\begin{equation*}
\mathrm{FD}(c\mu ,\gamma )\overset{\cdot }{\sim }\mathrm{PT}_{2}(c\mu
,\gamma )\qquad \text{ as }c\downarrow 0\text{ or}\rightarrow \infty \text{,}
\end{equation*}%
respectively. The result does not involve a large sample in any sense, but
instead applies as the mean $c\mu $ goes to the boundary of the parameter
space.

An example is the factorial dispersion model $\mathrm{FD}(\mu ,\gamma )$
generated by the discrete Linnik distribution (\ref{Linnik}). The
corresponding unit dispersion function behaves as $c_{0}\mu ^{2}$ as $\mu
\downarrow 0$ for some $c_{0}>0$. It follows that (\ref{gaco}) is satisfied
as $c\downarrow 0$.

We now turn to the binomial distribution $\mathrm{Bi}(\mu ,n)$, which is a
factorial dispersion model on additive form. The binomial distribution
satisfies the following thinning property: 
\begin{equation*}
c^{-1}\cdot \mathrm{Bi}(c\mu ,n)=\mathrm{Bi}(\mu ,n)\text{ for }0<c<1\text{.}
\end{equation*}%
As a result, suppose that the underdispersed additive factorial dispersion
model $\mathrm{FD}^{\ast }(\mu ,n)$ has unit dispersion function satisfying $%
v(\mu )\sim -\mu ^{2}$ as $\mu \downarrow 0$. Then 
\begin{equation}
c^{-1}\cdot \mathrm{FD}^{\ast }(c\mu ,n)\overset{D}{\longrightarrow }\mathrm{%
Bi}(\mu ,n)\qquad \text{ as }c\downarrow 0\text{.}  \label{BiMo}
\end{equation}

\begin{example}[COM-Poisson distribution]
\label{COM-Poisson}Consider the COM-Poisson distribution (cf. Shmueli \emph{%
et al.}, 2005)%
%TCIMACRO{\TeXButton{nocite:Shmueli2005}{\nocite{Shmueli2005}} }%
%BeginExpansion
\nocite{Shmueli2005}
%EndExpansion
with PMF%
\begin{equation}
P\left( X=x\right) =\frac{\lambda ^{x}}{\left( x!\right) ^{\nu }Z\left(
\lambda ,\nu \right) }\text{ for }x=0,1,\ldots \text{,}  \label{COM}
\end{equation}%
where $\lambda >0$, $\nu \geq 0$, and $Z\left( \lambda ,\nu \right) $ is a
normalizing constant. This family provides useful illustrations of several
of the above convergence results. The FCGF of (\ref{COM}) is 
\begin{equation}
C\left( t\right) =\log \frac{Z\left( \lambda \left( 1+t\right) ,\nu \right) 
}{Z\left( \lambda ,\nu \right) }\text{,}  \label{MoMo}
\end{equation}%
and the first two factorial cumulants of the local factorial tilting family $%
\mathrm{FT}(\mu )$ generated by (\ref{COM}) (for given value of $(\lambda
,\nu )$) are 
\begin{equation*}
\mu =\lambda \frac{\dot{Z}\left( \lambda \left( 1+\theta \right) ,\nu
\right) }{Z\left( \lambda \left( 1+\theta \right) ,\nu \right) }\text{ and }%
\lambda ^{2}\frac{\ddot{Z}\left( \lambda \left( 1+\theta \right) ,\nu
\right) }{Z\left( \lambda \left( 1+\theta \right) ,\nu \right) }-\mu ^{2}%
\text{,}
\end{equation*}%
respectively, where dots denote derivatives of $Z\left( \cdot ,\nu \right) $%
. These results confirm, for $\theta =0$, known results for the mean and
variance of the COM-Poisson distribution. It is well known that the
COM-Poisson converges to the Bernoulli distribution $\mathrm{Bi}(\lambda
/\left( 1+\lambda \right) ,1)$ as $\nu \rightarrow \infty $. A similar
result using the above binomial convergence result is obtained by using the
following asymptotic relation: $\log Z\left( \lambda \left( 1+\theta \right)
,\nu \right) \sim \log \left[ 1+\lambda \left( 1+\theta \right) \right] $ as 
$\theta \downarrow -1$, which implies that the local dispersion function of
the factorial tilting family $\mathrm{FT}(\mu )$ defined by (\ref{MoMo})
satisfies $v(\mu )\sim -\mu ^{2}$ as $\mu \downarrow 0$. Using (\ref{BiMo}),
we hence obtain the following Bernoulli convergence:%
\begin{equation*}
c^{-1}\cdot \mathrm{FT}(c\mu )\overset{D}{\longrightarrow }\mathrm{Bi}(\mu
,1)\qquad \text{as }c\downarrow 0.
\end{equation*}%
Turning now to the question of infinite divisibility, we note that \cite%
{Kokonendji2008} argued that the COM-Poisson is a weighted Poisson
distribution of the form (\ref{prob}) with a logconvex (logconcave) weight
function for $0<\nu <1$ ($\nu >1$), and is hence overdispersed
(underdispersed) with respect to the Poisson case $\nu =1$. Following \cite%
{Kokonendji2008}, we may further argue that the distribution is infinitely
divisible in the logconvex case $0<\nu <1$, whereas for $\nu >1$, we have a
discrete underdispersed distribution, which cannot be infinitely divisible.
In order to apply Theorem \ref{converges}, we need the following asymptotic
expansion, gleaned from Sellers \emph{et al.} (2012)%
%TCIMACRO{\TeXButton{nocite:Sellers2012}{\nocite{Sellers2012}}}%
%BeginExpansion
\nocite{Sellers2012}%
%EndExpansion
,%
\begin{equation}
C\left( t\right) \sim \alpha ^{-1}\lambda ^{\alpha }\left( 1+t\right)
^{\alpha }-\frac{1-\alpha }{2}\log \lambda \left( 1+t\right) \text{ as }%
t\rightarrow \infty ,  \label{Cit}
\end{equation}%
where $\alpha =1/\nu >0;$ the first term of the expansion being the leading
term. Let $p$ be related to $\alpha $ via (\ref{alfap}). In the
overdispersed case $\alpha >1$ ($p<1$), the result (\ref{Cit}) implies that
the dispersion function is power asymptotic, 
\begin{equation}
v(\mu )\sim \left( \alpha -1\right) \lambda ^{\alpha /\left( \alpha
-1\right) }\mu ^{p}  \label{poder}
\end{equation}%
as $\mu \rightarrow \infty $. The overdispersed factorial dispersion model $%
\mathrm{FD}(\mu ,\gamma )$ generated by (\ref{COM}) hence satisfies (\ref%
{convergence}) as $c\rightarrow \infty $. This result is remarkable, in that
it proves the existence of the power dispersion model $\mathrm{PT}_{p}(\mu
,\gamma )$ in the cases $p<0$ and $0<p<1$ (cf. Theorem \ref{characterize}
and Table \ref{tablep} above), because Theorem \ref{Moraq} implies the
existence of the factorial dispersion model corresponding to the limiting
local dispersion function. In the underdispersed case $0<\alpha <1$ ($p>2$)
we find that (\ref{poder}) is now satisfied as $\mu \downarrow 0$, but with
a negative coefficient for $\mu ^{p}$. In this case, however, Theorem \ref%
{converges} does not apply, because the case $c\downarrow 0$ in (\ref%
{convergence}) requires infinite divisibility, which we do not have in the
underdispersed case, nor do we seem to have infinite dilatability. These
results, while interesting on their own, are to some extent tangential to
the COM-Poisson distribution itself, because the factorial dispersion model $%
\mathrm{FD}(\mu ,\gamma )$ is not contained in the COM-Poisson family.
\end{example}

\subsection{Neyman Type A convergence}

A new result that emerges from Poisson-Tweedie asymptotics is convergence to
the Neyman Type A distribution $\mathrm{PT}_{1}(\mu ,\gamma )$ (cf. Section %
\ref{Pss}). This is the case, in particular, for a certain type of Poisson
mixtures.

Let us assume that the exponential dispersion model $\mathrm{ED}(\mu ,\gamma
)$ has unit variance function satisfying $V(\mu )\sim \mu $ as $\mu
\downarrow 0$ or $\mu \rightarrow \infty $. From Proposition \ref{samevefu}
we obtain that the Poisson mixture $P(\mathrm{ED}(\mu ,\gamma );1)$ is a
factorial dispersion model with unit dispersion function $v=V$. It then
follows from Theorem \ref{converges}, that the corresponding tilted and
dilated model converges to the Neyman Type A distribution, 
\begin{equation*}
c^{-1}\cdot \mathrm{FD}(c\mu ,c\gamma )\overset{D}{\rightarrow }\text{$%
\mathrm{PT}$}_{1}(\mu ,\gamma )\qquad \text{ as }c\downarrow 0\text{ or}%
\rightarrow \infty \text{,}
\end{equation*}%
respectively.

Let $\mathrm{ED}^{\ast }(\mu ,\lambda )$ denote an additive exponential
dispersion model generated by a distribution with an atom at zero, and such
that $(0,1)$ is the largest interval starting at zero with zero probability.
This may happen if the distributions has support $\mathbb{N}_{0}$, but the
distribution need not necessarily be discrete as long as there is an atom at
zero and positive probability at 1 or starting at 1. Then we know from \cite%
{Jorgensen1994} that the unit variance function satisfies $V(\mu )\sim \mu $
as $\mu \downarrow 0$. The corresponding exponential dispersion model is $%
\gamma \mathrm{ED}^{\ast }(\mu ,\gamma ^{-1}).$ Hence, let us consider the
factorial dispersion model $\mathrm{FD}(\mu ,\gamma )$ defined by the
Poisson mixture $P(\gamma \mathrm{ED}^{\ast }(\mu ,\gamma ^{-1});1)$. Then
we have the following large-sample convergence result: 
\begin{equation}
n\cdot \mathtt{\mathrm{FD}}(\mu /n,\gamma /n)\overset{D}{\rightarrow }%
\mathrm{PT}_{1}(\mu ,\gamma )\text{ }\qquad \text{as }n\rightarrow \infty 
\text{.}  \label{NTA}
\end{equation}%
Here, using (\ref{average}), the left-hand side of (\ref{NTA}) may be
interpreted as the sum of $n$ i.i.d. random variables with distribution $%
\mathtt{\mathrm{FD}}(\mu /n,\gamma )$.

\subsection{Hermite convergence revisited}

The Hermite distribution \textrm{$\mathrm{PT}$}$_{0}(\mu ,\gamma )$ of
Example \ref{Lapl} has power dispersion function with $p=0$ ($\alpha =2$),
although it is not a Poisson-Tweedie mixture. In Section \ref{sec:Poisson},
we have already considered a type of Hermite convergence similar to the
central limit theorem. We now consider Hermite convergence based on Theorem %
\ref{converges}.

It is important to keep in mind that the parameters of the Hermite
distribution must satisfy $0<\gamma \leq \mu $. The dilation property (\ref%
{dilation}) hence takes the following form: 
\begin{equation*}
c\cdot \mathrm{PT}_{0}(\mu ,\gamma )=\mathrm{PT}_{0}(c\mu ,c^{2}\gamma )%
\text{ for }c<1\text{,}
\end{equation*}%
where the restriction $c<1$ ensures that the transformed parameters satisfy
the condition. As a consequence, only the case $c\downarrow 0$ of (\ref%
{convergence}) is available in the Hermite case.

We first note that the local dispersion function of any locally
overdispersed factorial dispersion model $\mathrm{FD}(\mu ,\gamma )$ with $%
0\in \mathrm{int}\Psi _{0}$ satisfies $v(\mu )\sim v(0)>0$ as $\mu
\downarrow 0$. By Theorem \ref{converges}, and using the form (\ref{larsam})
we obtain convergence to the Hermite distribution,%
\begin{equation}
n^{1/2}\cdot \mathrm{FD}(n^{-1/2}\mu ,\gamma /n)\overset{D}{\rightarrow }%
\mathrm{PT}_{0}(\mu ,\gamma v(0))\text{ as }n\rightarrow \infty \text{,}
\label{none}
\end{equation}%
for each $\mu >0$, provided that $\gamma v(0)\leq \mu $. By referring once
more to Eq. (\ref{average}), we note that the left-hand side of (\ref{none})
involves a dilated and factorially tilted dilation average of $n$ i.i.d.
variables from the distribution $\mathrm{FD}(n^{-1/2}\mu ,\gamma )$.

The condition that $\gamma v(0)\leq \mu $ may be alleviated by means of
Poisson translation, similar to the procedure of Section \ref{sec:Poisson}.
This leads to the following result: 
\begin{equation*}
n^{1/2}\cdot \left[ \mathrm{FD}(\mu _{0}+n^{-1/2}\mu ,\gamma /n)\ominus \mu
_{0}\right] \overset{D}{\rightarrow }\mathrm{PT}_{0}(\mu ,\gamma v(\mu _{0}))%
\text{ as }n\rightarrow \infty \text{,}
\end{equation*}%
for $\mu _{0}\in \Psi _{0}$, provided that $\mu $ is chosen such that $%
\gamma v(\mu _{0})\leq \mu $. Here $\ominus $ denotes Poisson subtraction,
as defined by (\ref{substral}).

\section{Multivariate discrete dispersion models\label{sec:Multivariate}}

We now consider multivariate generalizations of some of the above results,
in particular a multivariate Poisson-Tweedie model (cf. Section \ref%
{sec:multPT}). We refer to \cite{Johnson1997} for general results on
multivariate discrete distributions.

\subsection{Multivariate factorial cumulants and other properties\label%
{sec:fcgf}}

If $\boldsymbol{X}$ is a $k$-variate random vector, and $\boldsymbol{s}$ a $%
k $-vector with non-negative elements, we use the notation $\boldsymbol{s}^{%
\boldsymbol{X}}=s_{1}^{X_{1}}\cdots s_{k}^{X_{k}}$. The multivariate FCGF 
\cite[p.~4]{Johnson1997} is defined by 
\begin{equation*}
C(\boldsymbol{t};\boldsymbol{X})=\log \mathrm{E}\left[ \left( \boldsymbol{1}+%
\boldsymbol{t}\right) ^{\boldsymbol{X}}\right] \text{ for }\boldsymbol{t}%
\geq -\boldsymbol{1}\text{,}
\end{equation*}%
where $\boldsymbol{1}$ is a vector of ones, and the inequality $\boldsymbol{t%
}\geq -\boldsymbol{1}$ is understood elementwise$.$ The effective domain for 
$C$ is defined by $\mathrm{dom}(C)=\left\{ \boldsymbol{t}\geq -\boldsymbol{1}%
:C(\boldsymbol{t})<\infty \right\} $. When $\boldsymbol{0}\in \mathrm{int}(%
\mathrm{dom}(C))$, the mean vector is $\mathrm{E}\left( \boldsymbol{X}%
\right) =\dot{C}(\boldsymbol{0};\boldsymbol{X})$, and the \emph{dispersion
matrix} $\mathrm{S}(\boldsymbol{X})=\ddot{C}(\mathbf{0};\boldsymbol{X})=%
\mathrm{Cov}\left( \boldsymbol{X}\right) -\mathrm{diag}\left[ \mathrm{E}%
\left( \boldsymbol{X}\right) \right] $ is a $k\times k$ symmetric matrix
with entries%
\begin{equation*}
\mathrm{S}_{ij}(\boldsymbol{X})=\left\{ 
\begin{array}{ccc}
\mathrm{S}(X_{i}) & \text{ for } & i=j \\ 
\mathrm{Cov}(X_{i},X_{j}) & \text{ for } & i\neq j\text{.}%
\end{array}%
\right.
\end{equation*}

We now present a new definition of multivariate over/underdispersion based
on the dispersion matrix. We say that the random vector $\boldsymbol{X}$ is 
\emph{equidispersed} if $\mathrm{S}(\boldsymbol{X})=0$. If $\boldsymbol{X}$
is not equidispersed, it is called \emph{over/underdispersed} if the
dispersion matrix $\mathrm{S}(\boldsymbol{X})$ is positive/negative
semidefinite, i.e. $\mathrm{S}(\boldsymbol{X})$ has at least one
positive/negative eigenvalue, respectively. We say that the dispersion of $%
\boldsymbol{X}$ is \emph{indefinite} if $\mathrm{S}(\boldsymbol{X})$ has
both positive and negative eigenvalues.

As an example, consider the bivariate Poisson distribution defined by 
\begin{equation}
\left[ 
\begin{array}{c}
X_{1} \\ 
X_{2}%
\end{array}%
\right] =\left[ 
\begin{array}{c}
U_{1}+U_{2} \\ 
U_{1}+U_{3}%
\end{array}%
\right] ,  \label{dobpoi}
\end{equation}%
where $U_{i}\sim \mathrm{Po}(\mu _{i}),$ $i=1,2,3$ are independent Poisson
random variables. The two marginals $X_{1}$ and $X_{2}$ are equidispersed,
and provided that $\mu _{1}>0$, the marginals are positively correlated, in
which case the dispersion is indefinite. In the independence case $\mu
_{1}=0 $ we find that $\boldsymbol{X}$ is equidispersed. More generally, if
the marginals of $\boldsymbol{X}$ are independent and Poisson distributed
with mean vector $\boldsymbol{\mu }\geq \boldsymbol{0}$, we obtain the FCGF 
\begin{equation}
C(\boldsymbol{t};\boldsymbol{X})=\boldsymbol{\mu }^{\top }\boldsymbol{t}%
\text{,}  \label{Poim}
\end{equation}%
which is linear, and hence equidispersed. We note in passing, that the
multivariate Poisson FCGF (\ref{Poim}) is of homogeneous type, i.e. of the
form $K\left( \boldsymbol{t}^{\top }\boldsymbol{\mu }\right) $, where $%
K\left( 0\right) =0$, see 
%TCIMACRO{\TeXButton{citet Johnson1997}{\citet[p.~19]{Johnson1997}}}%
%BeginExpansion
\citet[p.~19]{Johnson1997}%
%EndExpansion
. The multinomial distribution $\boldsymbol{X}\sim \mathrm{Mu}(\boldsymbol{%
\mu },n)$ has dispersion matrix $\mathrm{S}(\boldsymbol{X})=-n\boldsymbol{%
\mu \mu }^{\top }$, making this distribution underdispersed.

We now derive the scaling properties of the dispersion matrix $\mathrm{S}%
\left( \boldsymbol{X}\right) $ with respect to dilation, generalizing the
results of Section \ref{sec:Dil}. For a random vector $\boldsymbol{X}$, we
define the\emph{\ dilation} \emph{linear combination} $\boldsymbol{c\cdot X}$
with coefficient vector $\boldsymbol{c}\geq \boldsymbol{0}$ ($1\times k$) as
follows: 
\begin{equation*}
C(t;\boldsymbol{c\cdot X})=C(\boldsymbol{c}^{\top }t;\boldsymbol{X})
\end{equation*}%
provided that the right-hand side is a (univariate) FCGF. The mean and
dispersion matrix of a dilation linear combination are given by 
\begin{equation}
\mathrm{E}\left( \boldsymbol{c\cdot X}\right) =\boldsymbol{c}\mathrm{E}(%
\boldsymbol{X})\text{ and }\mathrm{S}\left( \boldsymbol{c\cdot X}\right) =%
\boldsymbol{c}\mathrm{S}(\boldsymbol{X})\boldsymbol{c}^{\top }\text{,}
\label{produ}
\end{equation}%
respectively. It follows that if $\boldsymbol{c\cdot X}$ is equidispersed
for some $\boldsymbol{c}\neq \boldsymbol{0}$, then the dispersion matrix $%
\mathrm{S}(\boldsymbol{X})$ is singular. The reverse implication holds if
the vector $\boldsymbol{c}\geq \boldsymbol{0}$ is such that $\boldsymbol{c}%
\mathrm{S}(\boldsymbol{X})\boldsymbol{c}^{\top }=0$. Similarly, for an $\ell
\times k$ matrix $\boldsymbol{A}\geq \boldsymbol{0}$ we define $\boldsymbol{%
A\cdot X}$ by 
\begin{equation*}
C(\boldsymbol{t};\boldsymbol{A\cdot X})=C(\boldsymbol{A}^{\top }\boldsymbol{t%
};\boldsymbol{X})\text{,}
\end{equation*}%
again provided that the right-hand side is an FCGF. For a multivariate
Poison random vector $\boldsymbol{X}$ with FCGF (\ref{Poim}) this yields the
following transformation 
\begin{equation*}
C(\boldsymbol{t};\boldsymbol{A\cdot X})=\boldsymbol{\mu }^{\top }\boldsymbol{%
A}^{\top }\boldsymbol{t}=\left( \boldsymbol{A\mu }\right) ^{\top }%
\boldsymbol{t}
\end{equation*}%
making $\boldsymbol{A\cdot X}$ multivariate Poisson with mean $\boldsymbol{%
A\mu }$.

We now turn to a multivariate version of the law of thin numbers. We define
the dilation average for the i.i.d. sequence $\boldsymbol{X}_{1},\boldsymbol{%
X}_{2},\ldots $ of $k\times 1$ random vectors by 
\begin{equation*}
\overline{\boldsymbol{X}}_{n}=\left( n\boldsymbol{I}\right) ^{-1}\cdot 
\boldsymbol{S}_{n}\text{.}
\end{equation*}%
where $\boldsymbol{S}_{n}=\boldsymbol{X}_{1}+\cdots +\boldsymbol{X}_{n}$
denotes the $n$th partial sum and $\boldsymbol{I}$ is the identity matrix.
We assume that the $\boldsymbol{X}_{i}$ are discrete with mean vector $%
\boldsymbol{\mu \geq 0}$. Similar to the univariate case in Section \ref%
{sec:Poisson}, we obtain the FCGF for $\overline{\boldsymbol{X}}_{n}$ as
follows: 
\begin{eqnarray}
C\left( \boldsymbol{t};\overline{\boldsymbol{X}}_{n}\right) &=&C\left(
\left( n\boldsymbol{I}\right) ^{-1}\boldsymbol{t};\boldsymbol{X}_{1}+\cdots +%
\boldsymbol{X}_{n}\right)  \notag \\
&=&nC(n^{-1}\boldsymbol{t};\boldsymbol{X}_{1})=\boldsymbol{\mu }^{\top }%
\boldsymbol{t}+O(n^{-1}),  \label{cconv}
\end{eqnarray}%
which converges to the multivariate Poisson FCGF (\ref{Poim}) as $%
n\rightarrow \infty $.

To show Hermite convergence, we consider an i.i.d. sequence of discrete
random vectors $\boldsymbol{X}_{i}$ with $\mathrm{E}(\boldsymbol{X}_{1})=%
\boldsymbol{m}>\boldsymbol{0}$ and $\mathrm{S}(\boldsymbol{X}_{1})=%
\boldsymbol{\Sigma }>\boldsymbol{0}$. Define the translated standardized
variable $\boldsymbol{Z}_{n}\left( \boldsymbol{\mu }\right) $ for $%
\boldsymbol{\mu }\geq \boldsymbol{\Sigma 1}$ by%
\begin{equation*}
\boldsymbol{Z}_{n}\left( \boldsymbol{\mu }\right) =\left( n\boldsymbol{I}%
\right) ^{-1/2}\cdot \left[ \boldsymbol{S}_{n}\ominus \left( n\boldsymbol{m}%
-n^{1/2}\boldsymbol{\mu }\right) \right] ,
\end{equation*}%
where the Poisson subtraction $\ominus $ is defined by analogy with (\ref%
{substral}). By expanding the FCGF of $\boldsymbol{Z}_{n}\left( \boldsymbol{%
\mu }\right) $ we obtain 
\begin{equation*}
C(\boldsymbol{t};\boldsymbol{Z}_{n}\left( \boldsymbol{\mu }\right) )=%
\boldsymbol{\mu }^{\top }\boldsymbol{t}+\frac{1}{2}\boldsymbol{t}^{\top }%
\boldsymbol{\Sigma t}+O\left( n^{-1/2}\right) \text{,}
\end{equation*}%
which shows that the multivariate Hermite distribution of 
%TCIMACRO{\TeXButton{citep Johnson1997}{\citep[cf.][p.~274]{Johnson1997}} }%
%BeginExpansion
\citep[cf.][p.~274]{Johnson1997}
%EndExpansion
with mean vector $\boldsymbol{\mu }$ and dispersion matrix $\boldsymbol{%
\Sigma }$ appears in the limit as $n\rightarrow \infty $. Hence $\boldsymbol{%
Z}_{n}\left( \boldsymbol{\mu }\right) $ converges in distribution to the
multivariate Hermite distribution.

Finally, let us consider a multivariate generalization of the zero-inflation
index (\ref{ZI}), namely 
\begin{equation}
\mathrm{ZI}\left( \boldsymbol{X}\right) =1+\frac{\log P(\boldsymbol{X}=%
\boldsymbol{0})}{\mathrm{E}\left( X_{1}\right) +\cdots +\mathrm{E}\left(
X_{k}\right) }=1+\frac{C(-\boldsymbol{1};\boldsymbol{X})}{\boldsymbol{1}%
^{\top }\dot{C}(\boldsymbol{0};\boldsymbol{X})}\text{.}  \label{ZIM}
\end{equation}%
This index measures zero-inflation/deflation relative to independent Poisson
random variables (equidispersion) with the same total mean, corresponding to
positive/negative values of $\mathrm{ZI}\left( \boldsymbol{X}\right) $,
respectively. It is useful to extend this to a directional measure of
zero-inflation, namely 
\begin{equation*}
\mathrm{ZI}\left( \boldsymbol{c\cdot X}\right) =1+\frac{C(-1;\boldsymbol{%
c\cdot X})}{\dot{C}(\boldsymbol{0};\boldsymbol{c\cdot X})}=1+\frac{C(-%
\boldsymbol{c}^{\top };\boldsymbol{X})}{\boldsymbol{c}\dot{C}(\boldsymbol{0};%
\boldsymbol{X})}
\end{equation*}%
which reduces to (\ref{ZIM}) for $\boldsymbol{c=1}^{\top }$. This index
measures zero-inflation/deflation for the dilation linear combination $%
\boldsymbol{c\cdot X}$ as a function of $\boldsymbol{c}$.

\subsection{Multivariate Poisson-Tweedie models\label{sec:multPT}}

We now introduce a new class of multivariate Poisson-Tweedie mixtures, which
is based on the multivariate Tweedie distributions of \cite{Jorgensen2012}.
Consider the $k$-variate Tweedie distribution $\boldsymbol{Y}\sim \mathrm{Tw}%
_{p}(\boldsymbol{\mu },\boldsymbol{\Sigma )}$ with mean vector $\boldsymbol{%
\mu }$ and covariance matrix 
\begin{equation}
\mathrm{Cov}(\boldsymbol{Y})=\left[ \boldsymbol{\mu }\right] ^{p/2}%
\boldsymbol{\Sigma }\left[ \boldsymbol{\mu }\right] ^{p/2}  \label{CovY}
\end{equation}%
where $\boldsymbol{\Sigma }$ denotes a $k\times k$ symmetric
positive-definite matrix, and the notation $\left[ \boldsymbol{\mu }\right]
^{p/2}$ denotes a power of the diagonal matrix $\left[ \boldsymbol{\mu }%
\right] =\mathrm{diag}(\boldsymbol{\mu })$. By construction, this
distribution has univariate Tweedie marginals, see \cite{Jorgensen2012}.

Let us define the \emph{multivariate Poisson-Tweedie model }$\boldsymbol{%
X\sim }\mathrm{PT}_{p}(\boldsymbol{\mu },\boldsymbol{\Sigma )}$ as a Poisson
mixture 
\begin{equation*}
\boldsymbol{X}|\boldsymbol{Y\sim }\text{ independent }\mathrm{Po}(Y_{i})%
\text{ for }i=1,\ldots ,k\text{,}
\end{equation*}%
where $X_{1},\ldots ,X_{k}$ are assumed conditionally independent given $%
\boldsymbol{Y}$. The multivariate Poisson-Tweedie model has univariate
Poisson-Tweedie margins, $X_{i}\sim \mathrm{PT}_{p}(\mu _{i},\sigma _{ii}),$
where $\sigma _{ij}$ denote the entries of $\boldsymbol{\Sigma }$. The mean
vector is $\boldsymbol{\mu }$ and the dispersion matrix is (\ref{CovY})
(positive-definite) making the distribution overdispersed. The covariance
matrix for $\boldsymbol{X}$ has the form 
\begin{equation*}
\mathrm{Cov}(\boldsymbol{X})=\left[ \boldsymbol{\mu }\right] +\left[ 
\boldsymbol{\mu }\right] ^{p/2}\boldsymbol{\Sigma }\left[ \boldsymbol{\mu }%
\right] ^{p/2}\text{,}
\end{equation*}%
making it straightforward to fit multivariate Poisson-Tweedie regression
models using quasi-likelihood. The multivariate Poisson-Tweedie model
satisfies the following dilation property:%
\begin{equation*}
\left[ \boldsymbol{c}\right] \cdot \mathrm{PT}_{p}(\boldsymbol{\mu },%
\boldsymbol{\Sigma )}=\mathrm{PT}_{p}(\left[ \boldsymbol{c}\right] 
\boldsymbol{\mu },\left[ \boldsymbol{c}\right] ^{1-p/2}\boldsymbol{\Sigma }%
\left[ \boldsymbol{c}\right] ^{1-p/2}),
\end{equation*}%
where $\boldsymbol{c}$ is a $k$-vector with positive elements, generalizing
the univariate dilation property (\ref{dilation}). In this way, we obtain
multivariate generalizations of all the Poisson-Tweedie models of Table \ref%
{tablep} for $p\geq 1$, including multivariate Neyman Type A, Pólya-Aeppli,
negative binomial and Poisson-inverse Gaussian distributions.

\section{Discussion\label{sec7}}

In this paper have developed a new class of discrete factorial dispersion
models based on exploring the properties of the factorial cumulant
generating function, and we have shown that the dispersion function is a
powerful characterization and convergence tool for factorial dispersion
models. In particular, the Poisson-Tweedie convergence theorem implies that
Poisson-Tweedie models are likely to appear frequently in practice, making
these models especially useful for modelling overdispersed count data. These
results depend in a crucial way on interpreting the dilation operator as a
discrete analogue of scaling.

These results show that factorial dispersion models are in many ways
analogous to exponential dispersion models and to the recently proposed
classes of extreme and geometric dispersion models 
%TCIMACRO{%
%\TeXButton{citep:Jorgensen2007a,Jorgensen2010}{\citep{Jorgensen2007a,Jorgensen2010}}}%
%BeginExpansion
\citep{Jorgensen2007a,Jorgensen2010}%
%EndExpansion
. A common trait for these four types of dispersion models is the role of
power asymptotics, which in the extreme dispersion model case implies some
of the classical convergence results for extremes towards generalized
extreme value distributions (Weibull, Fréchet and Gumbel distributions), see 
%TCIMACRO{\TeXButton{citet: Jorgensen2007a}{\citet{Jorgensen2007a}} }%
%BeginExpansion
\citet{Jorgensen2007a}
%EndExpansion
for details. It seems likely that there exist further types of dispersion
models with a similar structure, for example in free probability, where \cite%
{Bryc2008} has introduced so-called free exponential families, and studied
an analogue of quadratic variance functions.

Many of our results have multivariate analogues, and in particular we have
introduced a class of multivariate Poisson-Tweedie mixtures with
Poisson-Tweedie margins. We have introduced a multivariate notion of over-
and underdispersion, and a multivariate zero-inflation index. We have also
shown that the dilation properties of the dispersion matrix are similar to
the scaling properties of the covariance matrix.

There remain a number of further questions to be dealt with for factorial
dispersion models. In particular, we need to develop methods for probability
calculations and simulations further. We are currently developing methods
for quasi-likelihood estimation and inference for multivariate
Poisson-Tweedie models, along the same lines as \cite{Jorgensen2011a}. We
would also like to obtain a better understanding of underdispersion for
factorial dispersion models, perhaps based on the M-transformation, where,
however, we are faced with the problem of deciding on the existence of the
M-transformation in each case. Finally, it seems possible to obtain new
types of point processes based on infinitely divisible factorial dispersion
models. In particular, point processes based on Poisson-Tweedie models would
seem to have useful dilation properties.

\section*{Acknowledgements}

We are grateful to Christian Weiß for useful comments on a previous version
of the paper.

\section*{Appendix A: Exponential dispersion models}

In this appendix, we summarize some relevant facts about exponential
dispersion models and Tweedie models. An exponential dispersion model $%
\mathrm{ED}(\mu ,\gamma )$ with mean $\mu \in \Omega $, dispersion parameter 
$\gamma >0$ and unit variance function $V(\mu )$ has PDF of the form%
\begin{equation}
f(y;\mu ,\gamma )=a(y;\gamma )\exp \left[ -\frac{1}{2\gamma }d(y;\mu )\right]
\text{ for }y,\mu \in \Omega \text{,}  \label{ED}
\end{equation}%
where the unit deviance function $d(y;\mu )$ is defined by 
\begin{equation*}
d(y;\mu )=2\int_{\mu }^{y}\frac{y-z}{V(z)}\,dz\text{ for }y,\mu \in \Omega 
\text{.}
\end{equation*}%
The model (\ref{ED}) is, for each known value of $\gamma $, a natural
exponential family with variance function $\gamma V(\mu )$. Hence, the
function $a(y;\gamma )$ may be determined by Fourier inversion from the CGF,
which may in turn be obtained from $V$. The model $\mathrm{ED}(\mu ,\gamma )$
satisfies the following reproductive property:%
\begin{equation}
\overline{Y}_{n}\sim \mathrm{ED}(\mu ,\gamma /n),  \label{ybar}
\end{equation}%
where $\overline{Y}_{n}$ is the average of $Y_{1},\ldots ,Y_{n}$, which are
i.i.d. from $\mathrm{ED}(\mu ,\gamma )$.

The Tweedie exponential dispersion model $\mathrm{Tw}_{p}(\mu ,\gamma )$ has
mean $\mu $ and unit variance function%
\begin{equation*}
V(\mu )=\mu ^{p}\qquad \text{for }\quad \mu \in \Omega _{p}\text{,}\qquad 
\text{where }\quad p\notin \left( 0,1\right) .
\end{equation*}%
The domain for $\mu $ is either $\Omega _{0}=R$ or $\Omega _{p}=R_{+}$ for $%
p\neq 0$. Tweedie models satisfy the scaling property 
\begin{equation}
c\text{$\mathrm{Tw}$}_{p}(\mu ,\gamma )=\mathtt{\mathrm{Tw}}_{p}(c\mu
,c^{2-p}\gamma )\text{ }\qquad \text{for }c>0\text{.}  \label{scaling}
\end{equation}

Conventional Tweedie asymptotics 
%TCIMACRO{\TeXButton{citep: Jorgensen1994}{\citep{Jorgensen1994}} }%
%BeginExpansion
\citep{Jorgensen1994}
%EndExpansion
have the following form. If $\mathtt{\mathrm{ED}}(\mu ,\gamma )$ with unit
variance function $V(\mu )$ satisfies%
\begin{equation*}
V(\mu )\sim \mu ^{p}\text{ as }\mu \downarrow 0\text{ or }\mu \rightarrow
\infty
\end{equation*}%
then 
\begin{equation}
c^{-1}\mathtt{\mathrm{ED}}(c\mu ,c^{2-p}\gamma )\overset{D}{\rightarrow }%
\text{$\mathrm{Tw}$}_{p}(\mu ,\gamma )\text{ }\qquad \text{as }c\downarrow 0%
\text{ or }c\rightarrow \infty \,\text{,}  \label{conv}
\end{equation}%
respectively. The proof is based on convergence of the variance function on
the left-hand side of (\ref{conv}), 
\begin{equation*}
c^{-2}c^{2-p}\gamma V(c\mu )\rightarrow \gamma \mu ^{p}\text{,}
\end{equation*}%
applying \nocite{Mora1990}Mora's (1990) convergence theorem. The case $%
c^{2-p}\rightarrow \infty $ requires the model $\mathtt{\mathrm{ED}}(\mu
,\gamma )$ to be infinite divisible. This result implies a Tweedie
approximation, by means of (\ref{scaling}) 
\begin{equation*}
\mathtt{\mathrm{ED}}(c\mu ,c^{2-p}\gamma )\overset{\cdot }{\sim }\mathtt{%
\mathrm{Tw}}_{p}(c\mu ,c^{2-p}\gamma )\text{ }\qquad \text{as }c\downarrow 0%
\text{ or }c\rightarrow \infty \text{.}
\end{equation*}

In some cases, we have a large-sample interpretation of Tweedie convergence.
Let us consider the average\textbf{\ }$\bar{Y}_{n}$ with distribution (\ref%
{ybar}). Then for $p\neq 2$ we obtain 
\begin{equation*}
n^{-1/(p-2)}\mathtt{\mathrm{ED}}(n^{1/(p-2)}\mu ,\gamma /n)\overset{D}{%
\rightarrow }\text{$\mathrm{Tw}$}_{p}(\mu ,\gamma )\text{ }\qquad \text{as }%
n\rightarrow \infty \text{.}
\end{equation*}%
We interpret this result as saying that the scaled and exponentially tilted
average $\bar{Y}_{n}$ converges to a Tweedie distribution as $n\rightarrow
\infty $.

\section*{Appendix B: Proof of Theorem 4.1\label{secApp}}

Consider a sequence of factorial tilting families $\mathrm{FT}_{n}(\mu )$
with local dispersion functions $v_{n}$ having domain $\Psi _{n}$ and FCGF $%
C_{n}$ satisfying the conditions of Theorem \ref{Moraq}. The idea of the
proof is to obtain the FCGF derivative $\dot{C}$ from the limiting
dispersion function $v$, and in turn use the uniform convergence to show
convergence of the sequence $C_{n}$.

We begin by considering the nonzero case, where $v(\mu )\neq 0$ for $\mu \in
\Psi _{0}$. Let $K$ be a given compact subinterval of $\Psi _{0}$. By
assumption $\Psi _{0}\subseteq \mathrm{int}\left( \lim \Psi _{n}\right) $,
so we may assume that $K\subseteq \Psi _{n}$ from some $n_{0}$ on. We only
need to consider $n>n_{0}$ from now on. Fix a $\mu _{0}\in \mathrm{int}\,K$.
Let $\psi _{n}=\dot{C}_{n}^{-1}$ denote the inverse FCGF derivative defined
by $\dot{\psi}_{n}\left( \mu \right) =1/v_{n}(\mu )$ on $\Psi _{n}$ and $%
\psi _{n}\left( \mu _{0}\right) =0$. Let $\dot{C}_{n}$, $C_{n}$ etc. denote
the quantities associated with this parametrization. Similarly, define $\psi
:\Psi _{0}\rightarrow \mathbb{R}$ by $\dot{\psi}\left( \mu \right) =1/v(\mu
) $ on $\Psi _{0}$ and $\psi (\mu _{0})=0$. Then for $\mu \in K$ 
\begin{equation}
\left\vert \dot{\psi}_{n}\left( \mu \right) -\dot{\psi}\left( \mu \right)
\right\vert =\frac{\left\vert v_{n}(\mu )-v(\mu )\right\vert }{v_{n}(\mu
)v(\mu )}\text{.}  \label{psin}
\end{equation}%
By the uniform convergence of $v_{n}(\mu )$ to $v(\mu )$ on $K$, it follows
that $\left\{ v_{n}(\mu )\right\} $ is uniformly bounded on $K$. Since $%
v(\mu )$ is bounded on $K$, it follows from (\ref{psin}) and from the
uniform convergence of $v_{n}$ that $\dot{\psi}_{n}\left( \mu \right)
\rightarrow \dot{\psi}\left( \mu \right) $ uniformly on $K$. This and the
fact that $\psi _{n}\left( \mu _{0}\right) =\psi (\mu _{0})$ for all $n$
implies, by a result from \nocite{Rudin1976}Rudin (1976, Theorem 7.17), that 
$\psi _{n}\left( \mu \right) \rightarrow \psi \left( \mu \right) $ uniformly
on $K$. Since $K$ was arbitrary, we have $\psi _{n}\left( \mu \right)
\rightarrow \psi \left( \mu \right) $ for all $\mu \in \Psi _{0}$.

Let $I_{n}=\psi _{n}\left( \Psi _{n}\right) $ and $I_{0}=\psi (\Psi
_{0})\subseteq \mathrm{int}\left( \lim I_{n}\right) $. Let $J=\psi
(K)\subseteq I_{0}$ and $J_{n}=\psi _{n}(K)\subseteq I_{n}$. Define $\dot{C}%
:I_{0}\rightarrow \Psi _{0}$ by $\dot{C}(y)=\psi ^{-1}(y)$. Since $\psi $ is
strictly monotone and differentiable, the same is the case for $\dot{C}$.
Let $\mu \in K$ be given and let $y=\psi (\mu )\in J$ and $y_{n}=\psi
_{n}(\mu )\in J_{n}$. Since $v_{n}(\mu )$ is uniformly bounded on $K$, there
exists an $M>0$ such that $\left\vert v_{n}(\mu )\right\vert \leq M$ for all 
$n$ and $\mu \in K$. It follows that $\left\vert \ddot{C}_{n}(y)\right\vert
=\left\vert v_{n}\left( \dot{C}_{n}(y)\right) \right\vert \leq M$ for all $%
y\in J$ due to the fact that $J\subseteq J_{n}$ for $n$ large enough. Since $%
\mu =\dot{C}(y)=\dot{C}_{n}(y_{n})$ we find, using the mean value theorem,
that 
\begin{eqnarray*}
\left\vert \dot{C}_{n}(y)-\dot{C}(y)\right\vert &=&\left\vert \dot{C}_{n}(y)-%
\dot{C}_{n}(y_{n})\right\vert \\
\ &\leq &M\left\vert y-y_{n}\right\vert \\
\ &=&M\left\vert \psi (\mu )-\psi _{n}(\mu )\right\vert \text{.}
\end{eqnarray*}%
This implies that $\dot{C}_{n}(y)\rightarrow \dot{C}(y)$ uniformly in $y\in
J $. Since $C(0)=C_{n}(0)$ for all $n$, it follows by similar arguments as
above that $C_{n}(y)\rightarrow C(y)$ uniformly on $J$. We conclude from the
convergence of the sequence of MGFs $\exp \left[ C_{n}\left( e^{s}-1\right) %
\right] \rightarrow \exp \left[ C\left( e^{s}-1\right) \right] $ for $s\in
\log \left( J+1\right) $ that the sequence of distributions $\mathrm{FT}%
_{n}(\mu _{0})$ converges weakly to a probability measure $P$ with FCGF $C$.
We let $\mathrm{FT}(\mu )$ denote the factorial tilting family generated by $%
P$ with local dispersion function $v$ on $\Psi _{0}$. We may now complete
the proof in the nonzero case by proceeding like in the proof of Proposition %
\ref{LTN}.

In the case where $v(\mu )=0$ (the zero case), we cannot define the function 
$\psi $ as above. Instead we take $C(t)=t\mu _{0},$ such that $\dot{C}%
(t)=\mu _{0}$ and $\ddot{C}(t)=0$ for $t\in \mathbb{R}$. For any $\epsilon
>0 $, we may choose an $n_{0}$ such that $\left\vert v_{n}(\mu )\right\vert
\leq \epsilon $ for any $n\geq n_{0}$ and $\mu \in K$. For such $n$ and $\mu 
$ we hence obtain%
\begin{equation*}
\left\vert \psi _{n}(\mu )\right\vert =\int_{\mu _{0}}^{\mu }\frac{1}{%
\left\vert v_{n}(t)\right\vert }\,dt\geq \frac{\left\vert \mu -\mu
_{0}\right\vert }{\epsilon }\text{,}
\end{equation*}%
which can be made arbitrarily large by choosing $\epsilon $ small. We hence
conclude that $J_{n}=\psi _{n}(K)\rightarrow \mathbb{R}$ as $n\rightarrow
\infty $.

Now we let $J$ be a compact interval such that $0\in \mathrm{int}J$,
implying that $J\subseteq $ $J_{n}$ for $n$ large enough. For such $n$ we
hence obtain that $\left\vert \ddot{C}_{n}(t)\right\vert =\left\vert v_{n}(%
\dot{C}_{n}(t))\right\vert \leq \epsilon $ for all $t\in J$, because then $%
\dot{C}_{n}(t)\in K$. Since $\mu _{0}=\dot{C}(t)=\dot{C}_{n}(0)$ we find,
again by the mean value theorem, that for $t\in J$, 
\begin{equation*}
\left\vert \dot{C}_{n}(t)-\dot{C}(t)\right\vert =\left\vert \dot{C}_{n}(t)-%
\dot{C}_{n}(0)\right\vert \leq \epsilon \left\vert t\right\vert \text{.}
\end{equation*}%
This implies that $\dot{C}_{n}(t)\rightarrow \dot{C}(t)$ uniformly in $t\in
J $. By similar arguments as above, we conclude that $\mathrm{FT}_{n}(\mu
_{0}) $ converges weakly to a probability measure $P$ with FCGF $C(t)=t\mu
_{0}$, which implies the desired conclusion in the zero case, completing the
proof.

\bibliographystyle{authordate3}
\bibliography{bentj}

\begin{thebibliography}{}

\bibitem[\protect\citename{Barreto-Souza \& Bourguignon,
  }2014]{Barreto-Souza2014}
{\sc Barreto-Souza, W., \& Bourguignon, M.} 2014.
\newblock A skew {INAR(1)} process on {$\mathbb{Z}$}.
\newblock {\em AStA Advances in Statistical Analysis}, {\bf DOI},
  10.1007/s10182--014--0236--2.

\bibitem[\protect\citename{Bryc, }2009]{Bryc2008}
{\sc Bryc, W.} 2009.
\newblock Free exponential families as kernel families.
\newblock {\em Demonstr. Math.}, {\bf XLII}, 657--672.

\bibitem[\protect\citename{Dobbie \& Welsh, }2001]{Dobbie2001}
{\sc Dobbie, M.~J, \& Welsh, A.~H}. 2001.
\newblock Models for zero-inflated count data using the {N}eyman type {A}
  distribution.
\newblock {\em Statistical Modelling}, {\bf 1}, 65--80.

\bibitem[\protect\citename{El-Shaarawi {\em et~al.\ }\relax,
  }2011]{El-Shaarawi2011}
{\sc El-Shaarawi, A.~H., Zhu, R., \& Joe, H.} 2011.
\newblock Modelling species abundance using the {P}oisson-{T}weedie family.
\newblock {\em Environmetrics}, {\bf 22}, 152--164.

\bibitem[\protect\citename{Giles, }2010]{Giles2010}
{\sc Giles, D.~E.} 2010.
\newblock Hermite regression analysis of multi-modal count data.
\newblock {\em Economics Bulletin}, {\bf 30}, 2936--2945.

\bibitem[\protect\citename{Harremoës {\em et~al.\ }\relax,
  }2010]{Harremoes2010}
{\sc Harremoës, P., Johnson, O., \& Kontoyiannis, I.} 2010.
\newblock Thinning, entropy, and the law of thin numbers.
\newblock {\em IEEE Transactions on Information Theory}, {\bf 56}, 4228--4244.

\bibitem[\protect\citename{Jensen \& Nielsen, }1997]{Jensen1997}
{\sc Jensen, S.~T., \& Nielsen, B.} 1997.
\newblock On convergence of multivariate {L}aplace transforms.
\newblock {\em Statist. Probab. Lett.}, {\bf 33}, 125--128.

\bibitem[\protect\citename{Johnson {\em et~al.\ }\relax, }1997]{Johnson1997}
{\sc Johnson, N.~L., Kotz, S., \& Balakrishnan, N.} 1997.
\newblock {\em Discrete Multivariate Distributions}.
\newblock New York: Wiley.

\bibitem[\protect\citename{Johnson {\em et~al.\ }\relax, }2005]{Johnson2005}
{\sc Johnson, N.~L., Kemp, A.~W., \& Kotz, S.} 2005.
\newblock {\em Univariate Discrete Distributions}. 3rd edn.
\newblock Hoboken, N.J.: Wiley.

\bibitem[\protect\citename{J{\o}rgensen, }1997]{Jorgensen1997}
{\sc J{\o}rgensen, B.} 1997.
\newblock {\em The Theory of Dispersion Models}.
\newblock London: Chapman \& Hall.

\bibitem[\protect\citename{J{\o}rgensen \& Kokonendji, }2011]{Jorgensen2010}
{\sc J{\o}rgensen, B., \& Kokonendji, C.~C.} 2011.
\newblock Dispersion models for geometric sums.
\newblock {\em Brazilian J. Probab. Statist.}, {\bf 25}, 263--293.

\bibitem[\protect\citename{J{\o}rgensen \& Martínez, }2013]{Jorgensen2012}
{\sc J{\o}rgensen, B., \& Martínez, J.~R.} 2013.
\newblock Multivariate exponential dispersion models.
\newblock {\em Pages  73--98 of:} {\sc Kollo, T.} (ed), {\em Multivariate
  Statistics: Theory and Applications. Proceedings of the IX Tartu Conference
  on Multivariate Statistics \& XX International Workshop on Matrices and
  Statistics.}
\newblock Singapore: World Scientific.

\bibitem[\protect\citename{J{\o}rgensen {\em et~al.\ }\relax,
  }1994]{Jorgensen1994}
{\sc J{\o}rgensen, B., Martínez, J.~R., \& Tsao, M.} 1994.
\newblock Asymptotic behaviour of the variance function.
\newblock {\em Scand. J. Statist.}, {\bf 21}, 223--243.

\bibitem[\protect\citename{J{\o}rgensen {\em et~al.\ }\relax,
  }2010]{Jorgensen2007a}
{\sc J{\o}rgensen, B., Goegebeur, Y., \& Martínez, J.~R.} 2010.
\newblock Dispersion models for extremes.
\newblock {\em Extremes}, {\bf 13}, 399--437.

\bibitem[\protect\citename{J{\o}rgensen {\em et~al.\ }\relax,
  }2011]{Jorgensen2011a}
{\sc J{\o}rgensen, B., Dem{\'e}trio, C. G.~B., Kristensen, E., Banta, G.~T.,
  Petersen, H.~C., \& Delefosse, M.} 2011.
\newblock Bias-corrected {P}earson estimating functions for {T}aylor's power
  law applied to benthic macrofauna data.
\newblock {\em Statist. Probab. Lett.}, {\bf 81}, 749--758.

\bibitem[\protect\citename{Karlis \& Xekalaki, }2005]{Karlis2005}
{\sc Karlis, D., \& Xekalaki, E.} 2005.
\newblock Mixed {P}oisson distributions.
\newblock {\em Int. Statist Rev.}, {\bf 73}, 35--58.

\bibitem[\protect\citename{Kemp, }1997]{Kemp1997}
{\sc Kemp, A.~W.} 1997.
\newblock Characterizations of a discrete normal distribution.
\newblock {\em J. Statist.Plann. Inf.}, {\bf 63}, 223--229.

\bibitem[\protect\citename{Kemp \& Kemp, }1965]{Kemp1965}
{\sc Kemp, C.~D., \& Kemp, A.~W.} 1965.
\newblock Some properties of the `{H}ermite' distribution.
\newblock {\em Biometrika}, {\bf 52}, 381--394.

\bibitem[\protect\citename{Khatri, }1959]{Khatri1959}
{\sc Khatri, C.~G.} 1959.
\newblock On certain properties of power-series distributions.
\newblock {\em Biometrika}, {\bf 46}, 486--490.

\bibitem[\protect\citename{Kokonendji \& P{\'e}rez-Casany,
  }2012]{Kokonendji2012}
{\sc Kokonendji, C.~C., \& P{\'e}rez-Casany, M.} 2012.
\newblock A note on weighted count distributions.
\newblock {\em Journal of Statistical Theory and Applications}, {\bf 11},
  337--352.

\bibitem[\protect\citename{Kokonendji {\em et~al.\ }\relax,
  }2004]{Kokonendji2004}
{\sc Kokonendji, C.~C., Dossou-Gb{\'e}t{\'e}, S., \& Dem{\'e}trio, C. G.~B.}
  2004.
\newblock Some discrete exponential dispersion models: {P}oisson-{T}weedie and
  {H}inde-{D}emétrio classes.
\newblock {\em SORT: Statistics and Operations Research Transactions}, {\bf
  28}, 201--214.

\bibitem[\protect\citename{Kokonendji {\em et~al.\ }\relax,
  }2008]{Kokonendji2008}
{\sc Kokonendji, C.~C., Mizère, D., \& Balakrishnan, N.} 2008.
\newblock Connections of the {P}oisson weight function to overdispersion and
  underdispersion.
\newblock {\em J. Statist. Plann. Inf.}, {\bf 138}, 1287--1296.

\bibitem[\protect\citename{Mass{\'e} \& Theodorescu, }2005]{Masse2005}
{\sc Mass{\'e}, J.-C., \& Theodorescu, R.} 2005.
\newblock Neyman {T}ype {A} distribution revisited.
\newblock {\em Statistica Neerlandica}, {\bf 59}, 206--213.

\bibitem[\protect\citename{McKenzie, }1985]{McKenzie1985}
{\sc McKenzie, E.} 1985.
\newblock Some simple models for discrete variates time series.
\newblock {\em Water Resources Bulletin}, {\bf 21}, 645--650.

\bibitem[\protect\citename{Mora, }1990]{Mora1990}
{\sc Mora, M.} 1990.
\newblock La convergence des fonctions variance des familles exponentielles
  naturelles.
\newblock {\em Ann. Fac. Sci. Toulouse (5)}, {\bf 11}, 105--120.

\bibitem[\protect\citename{Pistone \& Wynn, }1999]{Pistone1999}
{\sc Pistone, G., \& Wynn, H.~P.} 1999.
\newblock Finitely generated cumulants.
\newblock {\em Statistica Sinica}, {\bf 9}, 1029--1052.

\bibitem[\protect\citename{Puig, }2003]{Puig2003}
{\sc Puig, P.} 2003.
\newblock Characterizing additively closed discrete models by a property of
  their maximum likelihood estimators, with an application to generalized
  {H}ermite distributions.
\newblock {\em J. Amer. Statist. Assoc.}, {\bf 98}, 687--692.

\bibitem[\protect\citename{Puig \& Barquinero, }2011]{Puig2011}
{\sc Puig, P., \& Barquinero, F.} 2011.
\newblock An application of compound Poisson modelling to biological dosimetry.
\newblock {\em Proc. Royal Society A}, {\bf 467}, 897--910.

\bibitem[\protect\citename{Puig \& Valero, }2006]{Puig2006}
{\sc Puig, P., \& Valero, J.} 2006.
\newblock Count data distributions: some characterizations with applications.
\newblock {\em J. Amer. Statist. Assoc.}, {\bf 101}, 332--340.

\bibitem[\protect\citename{Puig \& Valero, }2007]{Puig2007}
{\sc Puig, P., \& Valero, J.} 2007.
\newblock Characterization of count data distributions involving additivity and
  binomial subsampling.
\newblock {\em Bernoulli}, {\bf 13}, 544--555.

\bibitem[\protect\citename{Risti{\'c} {\em et~al.\ }\relax, }2009]{Ristic2009}
{\sc Risti{\'c}, M.~M., Bakouch, H.~S., \& Nasti{\'c}, A.~S.} 2009.
\newblock A new geometric first-order integer-valued autoregressive
  {(NGINAR(1))} process.
\newblock {\em Journal of Statistical Planning and Inference}, {\bf 139},
  2218--2226.

\bibitem[\protect\citename{Roy, }2003]{Roy2003}
{\sc Roy, D.} 2003.
\newblock The discrete normal distribution.
\newblock {\em Communications in Statistics---Theory and Methods}, {\bf 32},
  1871--1883.

\bibitem[\protect\citename{Rudin, }1976]{Rudin1976}
{\sc Rudin, W.} 1976.
\newblock {\em Principles of Mathematical Analysis}. third edn.
\newblock New York: McGraw-Hill.

\bibitem[\protect\citename{Sellers {\em et~al.\ }\relax, }2012]{Sellers2012}
{\sc Sellers, K.F., Borle, S., \& Shmueli, G.} 2012.
\newblock The {COM-P}oisson model for count data: a survey of methods and
  applications.
\newblock {\em Applied Stochastic Models in Business and Industry}, {\bf 28},
  104--116.

\bibitem[\protect\citename{Shmueli {\em et~al.\ }\relax, }2005]{Shmueli2005}
{\sc Shmueli, G., Minka, T.~P., Kadane, J.~P., Borle, S., \& Boatwright, P.}
  2005.
\newblock A useful distribution for fitting discrete data: revival of the
  {C}onway-{M}axwell-{P}oisson distribution.
\newblock {\em Applied Statistics}, {\bf 54}, 127--142.

\bibitem[\protect\citename{Steutel \& {van Harn}, }1979]{Steutel1979}
{\sc Steutel, F.~W., \& {van Harn}, K.} 1979.
\newblock Discrete analogues of self-decomposability and stability.
\newblock {\em Ann. Probab.}, {\bf 7}, 893--899.

\bibitem[\protect\citename{Tweedie, }1984]{Tweedie1984}
{\sc Tweedie, M. C.~K.} 1984.
\newblock An index which distinguishes between some important exponential
  families.
\newblock {\em Pages  579--604 of:} {\sc Ghosh, J.~K., \& Roy, J.} (eds), {\em
  Statistics: Applications and New Directions. Proceedings of the Indian
  Statistical Institute Golden Jubilee International Conference}.
\newblock Calcutta: Indian Statistical Institute.

\bibitem[\protect\citename{Wei{\ss}, }2008]{Weiss2008}
{\sc Wei{\ss}, C.~H.} 2008.
\newblock Thinning operations for modeling time series of counts---a survey.
\newblock {\em AStA Advances in Statistical Analysis}, {\bf 92}, 319--341.

\bibitem[\protect\citename{Willmot, }1987]{Willmot1987}
{\sc Willmot, G.~E.} 1987.
\newblock The {P}oisson-inverse {G}aussian distribution as an alternative to
  the negative binomial.
\newblock {\em Scand. Actuar. J.}, {\bf 1987}, 113--127.

\bibitem[\protect\citename{Wimmer \& Altmann, }1999]{Wimmer1999}
{\sc Wimmer, G., \& Altmann, G.} 1999.
\newblock {\em Thesaurus of Univariate Discrete Probability Distributions.}
\newblock Essen: STAMM Verlag.

\end{thebibliography}

%TCIMACRO{%
%\TeXButton{Stop \tableofcontents}{\newpage
%\end{document}}}%
%BeginExpansion
\newpage
\end{document}